\numberwithin{equation}{section} 
\theoremstyle{plain}
\newtheorem{thm}{Theorem}[section]
\newtheorem{lem}[thm]{Lemma}
\newtheorem{pro}[thm]{Proposition}
\newtheorem{de}[thm]{Definition}
\newtheorem{rem}[thm]{Remark}
\def\R {{\Bbb R}}
\def\N {{\Bbb N}}
\def\Z {{\Bbb Z}}
\begin{document}
\baselineskip 14pt
\title{On  multifractal formalism for self-similar measures with overlaps.}

\author{Julien Barral}
\address{Laboratoire de G\'eom\'etrie, Analyse et Applications, CNRS, UMR 7539, Universit\'e Sorbonne Paris Nord, CNRS, UMR 7539,  F-93430, Villetaneuse, France}
\email{barral@math.univ-paris13.fr}

\author{De-Jun Feng}
\address{Department of Mathematics\\ The Chinese University of Hong Kong\\ Shatin,  Hong Kong\\ }
\email{djfeng@math.cuhk.edu.hk}

\thanks {
2000 {\it Mathematics Subject Classification}: 28A80, 37C45}

\date{}

\begin{abstract}
Let $\mu$ be a self-similar measure generated by an IFS $\Phi=\{\phi_i\}_{i=1}^\ell$ of similarities on $\mathbb R^d$ ($d\ge 1$).  When $\Phi$ is dimensional regular (see Definition~\ref{de-1.1}),  we give an explicit  formula for the $L^q$-spectrum $\tau_\mu(q)$ of $\mu$ over $[0,1]$, and show that  $\tau_\mu$  is differentiable over $(0,1]$ and the multifractal formalism holds for $\mu$ at any $\alpha\in [\tau_\mu'(1),\tau_\mu'(0+)]$. We also  verify the validity of the multifractal formalism of $\mu$  over $[\tau_\mu'(\infty),\tau_\mu'(0+)]$ for   two new classes of  overlapping algebraic IFSs by showing that the asymptotically weak separation condition holds. For one of them, the proof appeals to the recent result of Shmerkin \cite{Shmerkin2019} on the $L^q$-spectrum of  self-similar measures.
\end{abstract}

\maketitle

\section{Introduction}\label{S-1}

Self-similar sets and measures are natural and important objects in fractal geometry, at the interface of geometric measure theory, ergodic theory, number theory and harmonic analysis (see the recent surveys \cite{HochmanJMD,Hochman2018, Shmerkin2019b}). Spectacular and influential advances in the dimension theory  of these objects have been achieved  in the recent period \cite{Furstenberg2008,Hochman-S2012,Hochman2014,Hochman-S2015, BreuillardVarju2015, Shmerkin2019, Varju2019, Wu2019}, in particular in connection with the resolutions of  Furstenberg's conjectures  on the Hausdorff dimension of  the sums and intersections of $\times 2$- and $\times 3$-invariant sets on the 1-dimensional torus.

In this paper, we present  some new results on the multifractal analysis of self-similar measures. One  of them concerns the  precise value of their $L^q$-spectra over $[0,1]$ and the validity of the multifractal formalism, while the other ones provide sufficient conditions under which the underlying iterated function system (IFS) satisfies the {\it asymptotically weak separation condition} (AWSC), see Definition~\ref{AWSC}.   (This separation condition guarantees the validity of the multifractal formalism in the range of $q>0$ \cite{Feng2012}.) Most of these results rely on the achievements in \cite{BarralFeng2013}, \cite{Hochman2014} and \cite{Shmerkin2019}.  Before giving the backgrounds and  precise formulations of our results,  below we first introduce some necessary notation and definitions.

Recall that for a finite  Borel measure $\eta$ on $\R^d$ with compact support,  the {\it $L^q$-spectrum} of $\eta$ is defined as
$$
\tau_\eta(q)=\liminf_{r\rightarrow 0}\frac{\log \Theta_\eta(q, r)}{\log r},
$$
where
\begin{equation}
\label{e-tz1}
\Theta_\eta(q, r)= \sup
\sum_{i}\eta (B(x_i,r))^q ,\qquad r>0,\; q\in \R,
\end{equation}
and the supremum is taken over all  families of disjoint closed balls $%
\{B(x_i,r)\}_{i}$  of radii $r$ with centers $x_{i}\in \mbox{supp}(\eta)$.
It is easily checked that ${\tau}_\eta(q)$ is a concave
function of $q$ over $\R$. For $x\in \R^d$, the {\it local dimension} of $\eta$ at $x$ is defined as
$$
d(\eta,x)=\lim_{r\to 0}\frac{\log \eta(B(x,r))}{\log r},
$$
provided that the limit exists. Otherwise we use $\overline{d}(\eta,x)$ and $\underline{d}(\eta,x)$ to denote the upper and lower limits, respectively.
For $\alpha\in \R$, denote
$$
E_\eta(\alpha)=\left\{x\in \R:\; d(\eta,x)=\alpha\right\}.
$$
 We say that  {\it the multifractal formalism  holds for $\eta$  at $\alpha$} if $$
\dim_H E_\eta(\alpha)=\tau_\eta^*(\alpha):=\inf\{\alpha q-\tau_\eta(q):q\in\mathbb R\},$$ with the convention that $\dim_H \emptyset=-\infty$, where $\dim_H$ stands for the Hausdorff dimension  (see e.g. \cite{Falconer2003, Mattila1995} for the definition). One of the main objectives of multifractal analysis is to study the validity of this formalism for natural fractal  measures. For backgrounds and the rigorous mathematical foundations of the multifractal formalism, we refer to \cite{Falconer2003, Olsen1995, Pesin1997}.

From now on, suppose that $\mu=\mu_{\Phi,{\bf p}}$ is the  self-similar  measure associated with an IFS $\Phi=\{\phi_i\}_{i=1}^\ell$ of similarities on $\R^d$ and a probability vector ${\bf p}=(p_1,\cdots,p_\ell )$ with strictly positive entries.   That is, $\mu$ is the unique Borel probability measure on $\R^d$ satisfying the following relation:
$$
\mu=\sum_{i=1}^\ell p_i\mu\circ\phi_i^{-1}.
$$
 (See Section~2 or \cite{Falconer2003, Hutchinson1981} for more details.)  The measure $\mu$ is fully supported on the attractor $K$ of $\Phi$. It is known  that $\mu$ is exact dimensional in the sense that $d(\mu,x)$ equals a constant for $\mu$-a.e.~$x$ \cite{FengHu2009}. We use $\dim_H\mu$ to denote this constant  and call it the {\it Hausdorff dimension of $\mu$}.

Let us  briefly  summarize  some general results obtained up to now for such a measure concerning the validity of the multifractal formalism and the possible expressions for the function $\tau_\mu$.

The multifractal formalism is known to hold for $\mu$ at any $\alpha\in \mathbb R$ if $\{\phi_i\}_{i=1}^\ell$  satisfies the {\it open set condition} (see Section~\ref{S-2} for the definition), and moreover in this case, the $L^q$-spectrum  of $\mu$ is equal to the analytic function $T(q)$, which is the unique solution of the equation
\begin{equation}\label{Lqspec}
\sum_{i=1}^\ell p_i^qr_i^{-T(q)}=1,
\end{equation}
where $r_i$ stands for the contraction ratio of $\phi_i$; see  \cite{Patzschke2017, CawleyMauldin1992, LauNgai1999, Olsen1995}. It remains an interesting and challenging problem to study the case when the open set condition fails.

 It is known \cite{FengLau2009} that if $\{\phi_i\}_{i=1}^\ell$ satisfies the {\it weak separation condition} (WSC, see Section~\ref{S-2} for the definition)  and one considers the restriction of $\mu$ to some well chosen open ball, the multifractal formalism still holds at any $\alpha$.  If one relaxes this assumption to the AWSC (see Section~\ref{S-2}), then the multifractal formalism holds for $\mu$  at any $\alpha\in [\tau_\mu'(\infty), \tau_\mu'(0+)]$, where  $\tau_\mu'(\infty):=\lim_{q\to \infty}\tau_\mu(q)/q$ and $\tau_\mu'(0+)$ is the right derivative of $\tau_\mu$ at $0$ (see \cite[Theorem 1.3]{Feng2012}).  While without any separation condition, the multifractal formalism holds for $\mu$  at $\alpha=\tau_\mu'(q)$ provided that $\tau_\mu$ is differentiable at $q$ and $q\geq 1$ \cite{Feng2007}.  We emphasize that both  WSC and AWSC are weaker than the open set condition and  are satisfied by many interesting examples. For instance, consider an homogeneous IFS  $\{\rho x+a_i\}_{i=1}^\ell$ on $\R$ with $a_i\in \Z$. Then  this IFS satisfies the WSC if $1/\rho$ is a {\it Pisot number}, and the AWSC if $1/\rho$ is  a Pisot or {\it Salem number} (see e.g. \cite{LauNgai1999, Feng2007}). Recall that a Pisot number is an algebraic integer whose conjugates are all less than 1 in modulus, whilst a Salem number is an algebraic integer whose conjugates are all less than or equal to  1 in modulus, with at least one of which on the unit circle.
  The reader is referred to \cite{FengLau2009} for a large literature concerning concrete classes of self-similar measures with the WSC. It is worth pointing out that under the assumption of the WSC, the multifractal formalism may break down in the region of  $q<0$ (see e.g. \cite{HuLau2001, Shmerkin2005, Testud2006}).

 Thanks to the recent result obtained by Shmerkin~\cite{Shmerkin2019} (see Theorem~\ref{thm-S}), we know that when $d=1$, under the  {\it exponential separation condition} (ESC, see Definition~\ref{ESC}), the $L^q$-spectrum of $\mu$  is given by
  \begin{equation}
  \label{e-Shmerkin}
  \tau_\mu(q)=\min \{q-1,T(q)\}\; \mbox{  for all }q\ge 1,
  \end{equation}
   where $T$ is defined as in \eqref{Lqspec}, so $\tau_\mu$ is differentiable on $(1,\infty)$  except, perhaps at a single point $q_0>1$ with $q_0-1=T(q_0)$, whence the multifractal formalism holds for $\mu$  at every $$\alpha\in [\tau_\mu'(\infty), \tau_\mu'(1+)]\backslash (\tau_\mu'(q_0+), \tau_\mu'(q_0-));$$ see  Remark \ref{rem-A1}(ii).   We emphasize that the ESC is also weaker than the open set condition, but not like WSC and AWSC, the ESC does not allow exact overlaps. It is also worth noting that \eqref{e-Shmerkin} is also known to hold for almost all self-similar measures, for all $q\in [1,2]$ and $d\ge 1$, by a result of Falconer~\cite{Falconer1999}.

 The notion of ESC was introduced by Hochman \cite{Hochman2014,Hochman2019+}, who proved that under the assumption of the ESC on $\Phi$,   when $d=1$,  or $d>1$ but with additional irreducibility conditions on $\Phi$,  the dimension of $\mu$  is equal to $\min\{d,\dim_S\mu\}$, where
$$\dim_S \mu=T'(1)=\displaystyle\frac{\sum_{i=1}^\ell p_i\log p_i}{\sum_{i=1}^\ell p_i\log r_i},
$$
which is called the {\it similarity dimension of $\mu$};
moreover, $\dim_H K=\min \{d,\dim_S K\}$, where $\dim_S K=-T(0)$ is the {\it similarity dimension of $K$}.
 The ESC is satisfied by broad families of ovelapping IFSs.  For instance, it is satisfied by every algebraic IFS on $\R$ that does not allow exact overlaps \cite{Hochman2014}. Motivated by the achievements in \cite{Hochman2014,Hochman2019+},  we  introduce the following.

\begin{de}
\label{de-1.1}
An IFS $\Phi=\{\phi_i\}_{i=1}^\ell$ of similarities on $\R^d$ is said to be dimensional regular \footnote{ A central conjecture in fractal geometry asserts that every IFS of similarities on $\R$ is dimensional regular,  unless  it has exact overlaps.  In addition to the contributions \cite{Hochman2014,Hochman2019+} of Hochman,    Rapaport \cite{Rapaport2020} recently established the conjecture for those IFS on $\R$ with algebraic contractions and arbitrary translations.}
 if, for every  probability vector ${\bf p}=(p_1,\ldots, p_\ell)$ with strictly positive entries, the self-similar measure $\mu=\mu_{\Phi, {\bf p}}$ generated by $\Phi$ and ${\bf p}$ has dimension given by
$$
\dim_H\mu=\min\{d, \dim_S\mu\}.
$$
\end{de}

Our first result  says that if $\mu$ is a self-similar measure generated by a dimensional regular IFS, then the expression of the $L^q$-spectrum of $\mu$ over $[0, 1]$  can be explicitly determined and, moreover, the multifractal formalism holds for $\mu$ on the range of $q$ between $0$ and $1$.

\begin{thm}
\label{thm-1.1}
Let $\Phi=\{\phi_i\}_{i=1}^\ell$ be a dimensional regular IFS of similarities on $\R^d$   with ratios $r_1,\ldots, r_\ell$, and ${\bf p}=(p_1, \ldots, p_\ell)$ a probability vector with strictly positive entries. Let $\mu$ be the self-similar measure generated by $\Phi$ and ${\bf p}$,  and let $T(q)$, $q\in \R$,  be defined as in \eqref{Lqspec}. Then the following statements hold.
\begin{itemize}
\item[(1)]  The $L^q$-spectrum of $\mu$ on $[0,1]$ is given as follows:
\begin{itemize}
\item[(a)] If $T'(1)\geq d$, then $\tau_\mu(q)=d(q-1)$ for $q\in [0,1]$.
\item[(b)] If $T'(1)<d$ and $T(0)\geq -d$, then $\tau_\mu(q)=T(q)$ for $q\in [0,1]$.
\item[(c)] If $T'(1)<d$ and $T(0)<-d$, set $$\tilde{q}=\inf\{q\in (0,1): T'(q)q-T(q)\leq d\}.$$
 Then
$$
\tau_\mu(q)=
\left\{ \begin{array}{ll}
T(q) & \mbox{ if }q\in [\tilde{q}, 1],\\
\displaystyle \frac{q(d+T(\tilde{q}))}{\tilde{q}}-d & \mbox{ if }q\in [0, \tilde{q}).
\end{array}
\right.
$$
\end{itemize}
\item[(2)] $\tau_\mu$ is differentiable on $(0,1]$.  Moreover, for every $\alpha\in [\tau_\mu'(1),\tau_\mu'(0+)]$,
$$
\dim_HE_\mu(\alpha)= \tau^*_\mu(\alpha).
$$
\end{itemize}
\end{thm}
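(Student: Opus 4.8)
The plan is to prove part (1) by sandwiching $\tau_\mu$ between matching lower and upper bounds on $[0,1]$, and to deduce part (2) from the resulting explicit formula together with a concentration argument for an auxiliary self-similar measure. Throughout, for $q\in\R$ I set $\mathbf p^{(q)}=(p_i^{(q)})_{i=1}^\ell$ with $p_i^{(q)}=p_i^q r_i^{-T(q)}$; by \eqref{Lqspec} this is a probability vector, and I write $\nu_q=\mu_{\Phi,\mathbf p^{(q)}}$ for the associated self-similar measure. Using $T'(q)=\big(\sum_i p_i^{(q)}\log p_i\big)/\big(\sum_i p_i^{(q)}\log r_i\big)$ one computes $\dim_S\nu_q=qT'(q)-T(q)$, so that $\tilde q$ in case (c) is exactly where $\dim_S\nu_q$ drops to $d$; moreover the tangent line to $T$ at $\tilde q$ passes through $(0,-d)$ precisely because $\tilde q\,T'(\tilde q)-T(\tilde q)=d$. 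Since $T$ is real-analytic and concave, $\tfrac{d}{dq}\big(qT'(q)-T(q)\big)=qT''(q)\le 0$, so $q\mapsto\dim_S\nu_q$ is non-increasing on $(0,1]$, and a short convexity analysis shows that in every case the claimed formula coincides with the concave majorant on $[0,1]$ of $\max\{T(q),\,d(q-1)\}$.

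For the lower bound I would first prove, for every self-similar measure and with no separation assumption, the two estimates $\tau_\mu(q)\ge T(q)$ and $\tau_\mu(q)\ge d(q-1)$ on $[0,1]$. The first follows by writing $\mu=\sum_{I\in\Lambda_r}p_I\,\mu\circ\phi_I^{-1}$ over a scale-$r$ antichain $\Lambda_r$, applying subadditivity of $t\mapsto t^q$ (valid since $q\le 1$) to any disjoint ball family, and using that each $\phi_I^{-1}(B_j)$ has radius comparable to $1$; this yields $\Theta_\mu(q,r)\le C_q\sum_{I\in\Lambda_r}p_I^q\le C_q' r^{T(q)}$, where $\sum_{I\in\Lambda_r}p_I^q r_I^{-T(q)}=1$ for the complete prefix code $\Lambda_r$. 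The second is the trivial bound $\sum_j\mu(B_j)^q\le(\sum_j\mu(B_j))^q(\#\{j\})^{1-q}\le (Cr^{-d})^{1-q}$ from H\"older and the $\lesssim r^{-d}$ bound on the number of disjoint $r$-balls centred in $K$. Since $\tau_\mu$ is concave, it dominates the concave majorant of $\max\{T,\,d(\cdot-1)\}$, i.e. the claimed formula.

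The upper bound is where dimensional regularity enters, through three ingredients. First, exact dimensionality of $\mu$ \cite{FengHu2009} with $\dim_H\mu=\min\{d,T'(1)\}$ gives, via disjoint balls around $\mu$-typical points, $\Theta_\mu(q,r)\ge r^{\dim_H\mu\,(q-1)+o(1)}$, hence $\tau_\mu(q)\le\min\{d,T'(1)\}(q-1)$ on $[0,1]$; combined with $\tau_\mu\ge d(q-1)$ this already settles case (a). Second, $\Theta_\mu(0,r)$ is the $r$-packing number of $K$, so $\tau_\mu(0)=-\dim_B K=-\min\{d,\dim_S K\}=\max\{T(0),-d\}$, the value $\dim_B K=\min\{d,\dim_S K\}$ following from dimensional regularity. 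Third, for $q$ in the non-capped range $[\tilde q,1]$ (where $\dim_S\nu_q\le d$) I would show $\tau_\mu(q)\le T(q)$ via $\nu_q$: the claim $d(\mu,x)=T'(q)$ for $\nu_q$-a.e.\ $x$ yields $\dim_H E_\mu(T'(q))\ge\dim_H\nu_q=\dim_S\nu_q$ (dimensional regularity, no cap), which against the standard upper bound $\dim_H E_\mu(T'(q))\le\tau_\mu^*(T'(q))\le T'(q)q-\tau_\mu(q)$ forces $\tau_\mu(q)\le T'(q)q-\dim_S\nu_q=T(q)$. With these, case (b) follows on all of $[0,1]$, and in case (c) one has $\tau_\mu=T$ on $[\tilde q,1]$ and $\tau_\mu(0)=-d$; concavity together with the $C^1$-matching of $T$ and its tangent line at $\tilde q$ then forces $\tau_\mu$ to equal that tangent line on $[0,\tilde q]$.

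The heart of the matter, and the main obstacle, is the claim $d(\mu,x)=T'(q)$ for $\nu_q$-a.e.\ $x$ in the non-capped range. The upper inequality $\overline d(\mu,x)\le T'(q)$ is immediate from $\mu(B(x,r_{\omega|_n}))\ge p_{\omega|_n}$ and the strong law under $\mathbf p^{(q)}$. For the lower inequality I would exploit that $\dim_H\nu_q=\dim_S\nu_q$ makes the overlap dimensionally negligible: comparing the exact dimension of $\nu_q$ with the symbolic value shows that, $\nu_q$-a.e., the scale-$r$ cylinders meeting $B(x,r)$ carry total mass $\sum_{I}p_I^{(q)}\le\nu_q(B(x,Cr))=p^{(q)}_{\omega|_n}e^{o(n)}$. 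Writing $p_I=(p_I^{(q)})^{1/q}r_I^{T(q)/q}$ and applying the power-mean inequality with exponent $1/q\ge 1$ (this is exactly where $q\le 1$ is used) transfers the estimate to $\mu$: $\mu(B(x,r))\le\sum_I p_I\le e^{o(n)}r^{T(q)/q}\big(\sum_I p_I^{(q)}\big)^{1/q}\le p_{\omega|_n}e^{o(n)}$, giving $\underline d(\mu,x)\ge T'(q)$. The same computation proves part (2): for $\alpha=\tau_\mu'(q)=T'(q)$ with $q\in[\tilde q,1]$ — which exhausts $[\tau_\mu'(1),\tau_\mu'(0+)]$ since $\tau_\mu'$ is constant on $[0,\tilde q]$ — the measure $\nu_q$ is carried by $E_\mu(\alpha)$ and, being exact dimensional with $\dim_H\nu_q=\dim_S\nu_q=\tau_\mu^*(\alpha)$, yields $\dim_H E_\mu(\alpha)\ge\tau_\mu^*(\alpha)$ by the mass distribution principle, matching the general upper bound; differentiability of $\tau_\mu$ on $(0,1]$ is read directly from the formula (smooth on each piece, $C^1$ across $\tilde q$ by the tangent construction). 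Case (a), where the range degenerates to the single value $\alpha=d$, is handled directly since $\mu$ is exact dimensional of dimension $d=\tau_\mu^*(d)$.
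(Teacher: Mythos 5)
Your architecture is essentially the one the paper uses in Appendix~\ref{S-A}: the lower bound $\tau_\mu\ge T$ on $[0,1]$, the auxiliary self-similar measures $\nu_q$ with $\dim_S\nu_q=qT'(q)-T(q)$, dimensional regularity to identify $\dim_H\nu_q$ with this value in the non-capped range, the claim that $d(\mu,x)=T'(q)$ for $\nu_q$-a.e.\ $x$, the standard level-set upper bound $\dim_HE_\mu(\alpha)\le\alpha q-\tau_\mu(q)$, and concavity plus the tangent line at $\tilde q$ to force the linear piece. The one step you execute genuinely differently is the crux $\underline d(\mu,x)\ge T'(q)$ for $\nu_q$-a.e.\ $x$: the paper obtains it indirectly from the Lau--Ngai estimate $\dim_H\{z:\underline d(\mu,z)\le\beta\}\le\beta q-\tau_\mu(q)$ combined with $\tau_\mu\ge T$ and the exact dimensionality of $\nu_q$ (a set of dimension strictly below $\dim_H\nu_q$ is $\nu_q$-null), whereas you transfer the ball estimate directly via $p_I=(p_I^{(q)})^{1/q}r_I^{T(q)/q}$ and the power-mean inequality. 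Your computation is correct for each fixed $q\in(0,1]$ (indeed only $\nu_q(B(x,Cr))\le r^{qT'(q)-T(q)-\epsilon}$ is needed, not the finer symbolic comparison), and it makes that step more self-contained. Your replacements of Ngai's theorem by the direct bound $\tau_\mu(q)\le\dim_H\mu\,(q-1)$ in case (a), and of Falconer's Theorem 6.2 by a hands-on antichain argument, are also fine.

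Two boundary points are left uncovered. First, the power-mean step degenerates at $q=0$ (the exponent $1/q$ and the loss $r^{-\epsilon/q}$ blow up), and in case (b) the endpoint $\alpha=\tau_\mu'(0+)=T'(0)$ of part (2) corresponds exactly to $q=0$; your remark that $[\tau_\mu'(1),\tau_\mu'(0+)]$ is exhausted by $q\in[\tilde q,1]$ is true only in case (c), where $\tilde q>0$. The paper closes this by running the dimension-comparison argument at $q=0$ with auxiliary exponents $\epsilon_n\downarrow 0$ chosen so that $\alpha_n\epsilon_n-T(\epsilon_n)<-T(0)$; you need this, or some substitute, for the single value $\alpha=T'(0)$. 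Second, ``differentiable on $(0,1]$'' includes two-sided differentiability at $q=1$, which cannot be read off a formula established only on $[0,1]$: the paper proves $\tau_\mu'(1+)=\tau_\mu'(1-)$ by invoking $\tau_\mu'(1+)=\dim_H\mu$ (Shmerkin--Solomyak together with Young's theorem for exact dimensional measures) --- precisely the ingredient it singles out as the new input compared with the almost-sure result of \cite{BarralFeng2013} --- and your proposal never says anything about $\tau_\mu$ for $q>1$. Both gaps are repairable, but as written the proof does not deliver the full statement.
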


Putting Theorem \ref{thm-1.1} together with those  aforementioned  results in \cite{Shmerkin2019, Feng2007},  we see that if $d=1$ and the ESC holds, then the expression of $\tau_\mu$ is now known over $\R_+$, and the multifractal formalism holds  for $\mu$ at any $\alpha\in [\tau_\mu'(\infty), \tau_\mu'(0+)]
 $,  except a possible interval  $[\tau_\mu'(q_0+),\tau_\mu'(q_0-))$, where $q_0$ (if it exists) is the unique value in $(1,\infty)$  so that $q_0-1=T(q_0)$ and $\tau_\mu$ is not differentiable at $q_0$.  Furthermore by Remark \ref{rem-A1}(ii), the multifractal formalism holds  for $\mu$  at $\alpha=\tau'_\mu(q_0+)$. From the formula \eqref{e-Shmerkin}, one can check that  such $q_0$ exists only in the situation that  $T'(1)> 1$ and  $r_i< p_i$ for some  $i\in \{1,\ldots, \ell\}$.

A statement similar to Theorem \ref{thm-1.1} was obtained by the authors for almost all self-similar measures in  \cite[Theorem 6.4]{BarralFeng2013}; the proof of Theorem \ref{thm-1.1} turns essentially identical to that of \cite[Theorem 6.4]{BarralFeng2013}, except that at some point it requires the result of Shmerkin and Solymyak \cite[Theorem 5.1, Remark 5.2]{ShmerkinSolomyak2016} that $\tau'_\mu(1+)=\dim_H\mu$ for every self-similar measure instead of the results by Falconer \cite{Falconer1999} and Jordan, Pollicott and Simon~\cite{JordanPollicottSimon2006} on the $L^q$-spectrum and the Hausdorff dimension of almost all self-similar measures. For the sake of completeness and the reader's convenience, we include a  full proof of Theorem \ref{thm-1.1} in Appendix~\ref{S-A}.  We emphasize  that Theorem  \ref{thm-1.1} can be extended to more general measures, including the push-forwards of quasi-Bernoulli measures, the convolutions of certain self-similar measures and a class of dynamical driven self-similar measures; moreover, the multifractal formalism also hold for some of these measures on some range of $q>1$. For details, see the remarks and comments in the end of Section \ref{S-A}.

Our other results establish that the AWSC holds  for several   new classes of IFSs of similarities, either under the ESC, or for certain algebraic systems.

\begin{thm}
\label{thm-1.2}
Let $\Phi=\{\phi_i\}_{i=1}^\ell$ be an IFS of similarities on $\R$ with ratios $r_1,\ldots, r_\ell$. Suppose $\Phi$ satisfies the ESC. Then
$\Phi$ satisfies the AWSC if and only if $\sum_{i=1}^\ell r_i\leq 1$.
\end{thm}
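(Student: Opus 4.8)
The plan is to reduce the combinatorial content of the AWSC to a regularity property of a single, well chosen self-similar measure, and to use that $\sum_{i=1}^\ell r_i\le 1$ is equivalent to $s\le 1$, where $s=\dim_S K=-T(0)$ is the similarity dimension of $K$, i.e. the solution of $\sum_{i=1}^\ell r_i^s=1$. Throughout I would write $\Lambda_t=\{I=i_1\cdots i_k:\ r_I\le t<r_{i_1\cdots i_{k-1}}\}$ for the cut-set at scale $t$, so that $\sum_{I\in\Lambda_t}r_I^s=1$ and hence $\#\Lambda_t\asymp t^{-s}$, and recall (Definition~\ref{AWSC}) that the AWSC is governed by the growth as $t\to0$ of the maximal overlap count $N(t)=\sup_{x\in\R}\#\{\phi_I:\ I\in\Lambda_t,\ \phi_I(K)\cap B(x,t)\neq\emptyset\}$, distinct maps being counted once; the AWSC means precisely that $\lim_{t\to0}\log N(t)/\log(1/t)=0$. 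I would prove the two implications separately. The ESC enters in the reverse direction to prevent the cylinder family from collapsing under exact overlaps, and in the forward direction through Shmerkin's $L^q$-spectrum formula.

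For the implication $\sum_{i=1}^\ell r_i>1\Rightarrow\neg\mathrm{AWSC}$, suppose $s>1$. The $\asymp t^{-s}$ maps $\phi_I$, $I\in\Lambda_t$, are pairwise distinct by the ESC, and each image $\phi_I(K)$ is an interval of length $\asymp t$ contained in the bounded set $K$, of diameter $\asymp1$. Their total length is therefore $\asymp t^{-s}\cdot t=t^{1-s}\to\infty$, so a first-moment (pigeonhole) argument produces a point $x^*$ lying in at least $c\,t^{-(s-1)}$ of these distinct images. Since each such $\phi_I(K)$ meets $B(x^*,t)$, I get $N(t)\ge c\,t^{-(s-1)}$, whence $\liminf_{t\to0}\log N(t)/\log(1/t)\ge s-1>0$ and the AWSC fails. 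To be fully rigorous one restricts the pigeonhole to words of a single length, where the ESC directly forbids exact coincidences, and then selects a single size-scale by a further dyadic pigeonhole.

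For the converse, I would assume $s\le1$ and equip $\Phi$ with the natural weights $p_i=r_i^s$, giving the measure $\mu=\mu_{\Phi,\bp}$. A direct computation gives $T(q)=s(q-1)$, and since $\Phi$ satisfies the ESC on $\R$, Shmerkin's formula (Theorem~\ref{thm-S}) yields $\tau_\mu(q)=\min\{q-1,T(q)\}=s(q-1)$ for all $q\ge1$; in particular $\tau_\mu'(\infty)=\lim_{q\to\infty}\tau_\mu(q)/q=s$. Using the standard identity $\tau_\mu'(\infty)=\liminf_{r\to0}\log\big(\sup_x\mu(B(x,r))\big)/\log r$, this gives, for every $\epsilon>0$ and all small $r$, the Frostman-type bound $\sup_x\mu(B(x,r))\le r^{\,s-\epsilon}$. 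On the other hand, for each $x$ the symbolic cylinders indexed by the words $I\in\Lambda_t$ with $\phi_I(K)\cap B(x,t)\neq\emptyset$ are pairwise disjoint and each carries $\bp$-mass $p_I=r_I^s\ge(r_{\min}t)^s$, while their images all lie in $B(x,Ct)$ with $C=1+\mathrm{diam}(K)$; hence their number is at most $r_{\min}^{-s}t^{-s}\mu(B(x,Ct))$. Taking the supremum over $x$ and inserting the Frostman bound gives $N(t)\le r_{\min}^{-s}t^{-s}(Ct)^{\,s-\epsilon}\le C'\,t^{-\epsilon}$, so $\limsup_{t\to0}\log N(t)/\log(1/t)\le\epsilon$, and since $\epsilon$ is arbitrary the AWSC holds.

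The main obstacle is the converse direction, and specifically the passage from a statement about the typical local dimension to one about the worst-case concentration $\sup_x\mu(B(x,r))$: the bare equality $\dim_H\mu=s$ supplied by the ESC is not enough, since it controls local dimensions only almost everywhere and permits exceptional points of high concentration, exactly the points that could make $N(t)$ large. This is why I route the argument through $\tau_\mu'(\infty)$ and invoke the full $L^q$-spectrum formula of Shmerkin rather than merely the dimension formula of Hochman; the linearity $\tau_\mu(q)=s(q-1)$ for the natural weights is precisely the uniform ($\sup$-level) regularity that is needed. A secondary technical point, already flagged above, is the mismatch between the single length used by the ESC and the mixed lengths occurring in $\Lambda_t$; I expect this to be handled by a routine reduction to equal-length words together with a dyadic decomposition of the admissible size-scales, without affecting the exponents.
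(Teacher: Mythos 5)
Your proposal is correct and follows essentially the same route as the paper: the forward direction is the same covering/pigeonhole argument using $\#W_n\gtrsim 2^{ns}$ and the ESC to rule out exact overlaps, and the converse uses the same self-similar measure with weights $r_i^s$ together with Shmerkin's formula $\tau_\mu(q)=s(q-1)$ and the link between the overlap count and $\sup_x\mu(B(x,r))$. The only difference is presentational: you extract a uniform Frostman bound from $\tau_\mu'(\infty)=s$ and bound $N(t)$ directly, whereas the paper runs the same inequalities as a proof by contradiction via its Lemma~\ref{lem-k1}.
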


\begin{thm}
\label{thm-1.3}
Let $\Phi=\{\phi_i\}_{i=1}^\ell$ be a homogeneous IFS on $\R$ of the form $$\phi_i(x)=\frac{x}{\beta}+a_i,\qquad i=1,\ldots, \ell,$$
where $\beta$ is an algebraic integer with $|\beta|>1$ and $a_i$ are algebraic numbers.
Then the following properties hold.
\begin{itemize}
\item[(i)]
$\Phi$ satisfies the AWSC if and only if $$\lim_{n\to \infty}\frac{\log N_n}{n\log |\beta|}\leq 1,$$ where
$
N_n:=\#\{\phi_u:\; u\in \{1,\ldots, \ell\}^n\}
$.
In particular if $\ell\leq |\beta|$, then $\Phi$ satisfies the AWSC.
\item[(ii)] Let $K$ denote the attractor of $\Phi$. Then $$\dim_HK= \min\left\{1, \lim_{n\to \infty}\frac{\log N_n}{n\log
|\beta|}\right\}.$$
\end{itemize}

\end{thm}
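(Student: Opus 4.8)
The plan is to prove (ii) first and feed its dimension formula into (i). Set $s^*:=\lim_{n\to\infty}\frac{\log N_n}{n\log|\beta|}$; this limit exists because $N_{m+n}\le N_mN_n$ — a level-$(m+n)$ map factors as (level-$m$)$\circ$(level-$n$) and depends only on the two factors, so $\log N_n$ is subadditive. Each level-$n$ map has the form $\phi_u(x)=\beta^{-n}x+t_u$ with $t_u=\sum_{k=1}^na_{u_k}\beta^{-(k-1)}$, so the $N_n$ distinct cylinders $\phi_u(K)$ have diameter $\asymp|\beta|^{-n}$ and cover $K$; hence the $|\beta|^{-n}$-covering number of $K$ is at most $CN_n$, giving $\overline{\dim}_BK\le s^*$ and, with $\dim_HK\le1$, the upper bound $\dim_HK\le\min\{1,s^*\}$ of (ii).

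For the matching lower bound I would invoke the algebraic hypothesis through Hochman's theorem. The $N_n$ distinct level-$n$ maps form a homogeneous algebraic IFS $\Phi_n$ with the same attractor $K$, and Hochman's dimension theory — in which, for algebraic data, exact overlaps are the only mechanism depressing the dimension below the similarity value — yields self-similar measures on $K$ whose dimensions approach $\min\{1,s^*\}$. The passage from the $\ell^n$ words to the $N_n$ distinct maps is precisely what quotients out these exact overlaps, so $s^*$ plays the role of the effective similarity dimension and one gets $\dim_HK\ge\min\{1,s^*\}$.

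For (i) I would reduce the AWSC to subexponential growth of the overlap function $t_n:=\sup_{x\in\R}\#\{\phi_u:u\in\{1,\dots,\ell\}^n,\ \phi_u(K)\cap B(x,|\beta|^{-n})\ne\emptyset\}$ (distinct maps counted once); an ancestor/descendant argument gives $t_{m+n}\le t_mt_n$, so $\frac1n\log t_n$ converges. The implication ``AWSC $\Rightarrow s^*\le1$'' is the easy half: $K$ lies in a bounded interval, hence is covered by $\le C|\beta|^n$ balls of radius $|\beta|^{-n}$, and since each of the $N_n$ distinct cylinders meets one of them, $t_n\ge N_n/(C|\beta|^n)=|\beta|^{n(s^*-1)+o(n)}/C$; subexponentiality of $t_n$ then forces $s^*\le1$. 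The final clause of (i) is immediate, as $\ell\le|\beta|$ gives $N_n\le\ell^n\le|\beta|^n$, hence $s^*\le1$.

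The heart of the theorem is the converse ``$s^*\le1\Rightarrow$ AWSC'', and I expect this to be the main obstacle. Here $t_n$ is the maximal number of distinct translations $t_u$ in a window of length $\asymp|\beta|^{-n}$ inside a fixed bounded interval; since there are $\asymp|\beta|^n$ windows, the \emph{average} window count is $\asymp N_n|\beta|^{-n}=|\beta|^{n(s^*-1)}$, subexponential exactly when $s^*\le1$, so everything hinges on bounding the \emph{maximal} count by the average. Exponential separation is not sufficient: clearing denominators, $\beta^{n-1}t_u$ lies in a fixed finitely generated $\Z$-module, and the product formula (Garsia's estimate) gives only $|t_u-t_v|\gtrsim c^n$, which bounds $t_n$ by $(c|\beta|)^{-n}$ — still exponential once $\beta$ has a Galois conjugate of modulus $>1$. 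To obtain a genuinely subexponential bound I would pass to the Minkowski embedding, where translations lying in one window yield nonzero lattice points in a slab that is thin in the coordinate attached to $\beta$ but wide in the conjugate coordinates, and then argue that the distinct translations actually realized are not merely separated but regularly distributed at scale $|\beta|^{-n}$ — equivalently, that the normalized counting measure of the distinct cylinders is asymptotically monofractal of exponent $s^*$. It is exactly this regularity, max overlap $\le e^{o(n)}\max\{1,N_n|\beta|^{-n}\}$, that ties the maximal overlap back to the dimension computed in (ii), and that the algebraic structure — via Hochman's theory, excluding anomalous super-exponential concentration — must supply.
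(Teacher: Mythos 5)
Your easy directions are fine and match the paper: the covering/pigeonhole argument gives both $\overline{\dim}_B K\le s^*$ and the implication AWSC $\Rightarrow s^*\le 1$, and the clause $\ell\le|\beta|$ is immediate. But the two steps you leave open are not loose ends — they are the entire content of the proof, and one of your assertions about (ii) is not correct as stated. For the lower bound in (ii) you claim that passing to the $N_n$ distinct level-$n$ maps ``quotients out the exact overlaps'' so that Hochman's theory directly yields measures of dimension near $\min\{1,s^*\}$. It does not: the quotient IFS formed by the $N_L$ distinct level-$L$ maps still has exact overlaps at deeper levels, and for algebraic systems Hochman's formula reads $\dim_H\mu=\min\{1,h_G(\mu)/(L\log|\beta|)\}$ with $h_G$ the Garsia entropy (see Remark \ref{rem-5.2}), which can be strictly smaller than $\log N_L$ if the deeper coincidence classes carry very unequal mass. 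What is actually needed is the anti-concentration estimate \eqref{e-k6}, $\tilde p_w\le M(\kappa_L/N_L)^n$ for the quotient weights, and proving it is where the real work lies: the paper constructs in Proposition \ref{pro-a1} a dual affine IFS on $\mathbb{C}^m$ out of the Galois conjugates $\beta_j$ with $|\beta_j|>1$ (your ``Minkowski embedding'' is exactly the right instinct) and uses Garsia's product-formula lemma to show that the renormalized translation vectors of \emph{distinct} quotient maps are separated by $Cn^{-(m'/m-1)}$, whence the overlap count $\kappa_n$ of the dual system grows only polynomially and \eqref{e-k6} follows by a volume argument.

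For the converse ``$s^*\le 1\Rightarrow$ AWSC'', which you explicitly flag as unresolved, the missing regularity statement (maximal window count $\le e^{o(n)}\cdot$ average) is supplied in the paper not by Hochman's dimension theory but by Shmerkin's $L^q$-spectrum theorem, Theorem \ref{thm-S}(ii), applied to the level-$L$ quotient IFS with uniform weights: the weight bound \eqref{e-k6} forces $\lim_nT_n(q)\ge q\bigl(\log N_L-\log\kappa_L\bigr)/(L\log|\beta|)-\log N_L/(L\log|\beta|)$, so $\tau_\mu(q)$ has asymptotic slope at least $\bigl(\log N_L-\log\kappa_L\bigr)/(L\log|\beta|)$, while if $t_n\ge e^{\gamma n}$ with $\gamma>0$ then Lemma \ref{lem-k1} caps the slope at $\log N_L/(L\log|\beta|)-\gamma/\log|\beta|$; choosing $L$ with $\kappa_L<e^{\gamma L/2}$ gives the contradiction. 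The distinction matters: AWSC is a statement about maximal local multiplicity, i.e.\ an $L^q$-for-large-$q$ quantity, so an entropy/dimension result (which lives at $q\to1$) cannot by itself rule out super-exponential concentration on a dimension-zero set of windows; this is why your appeal to ``Hochman's theory, excluding anomalous super-exponential concentration'' cannot be made to work without the $L^q$ input. In short, your outline correctly locates the difficulty and even the right algebraic embedding, but both the separation estimate \eqref{e-k6} and the $L^q$-spectrum argument that converts it into a bound on $t_n$ are absent, and these are the theorem's proof.
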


\begin{thm}
\label{thm-1.4}
Let $\Phi=\{\phi_i\}_{i=1}^\ell$ be an IFS on $\R^d$ of the form $$\phi_i(x)=\frac{x}{m_i}+a_i,\qquad i=1,\ldots, \ell,$$
where $m_i\in \Z$ with $|m_i|>1$ and $a_i\in {\Bbb Q}^d$.
Then $\Phi$ satisfies the AWSC.
\end{thm}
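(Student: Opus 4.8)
The plan is to verify the AWSC directly from its definition (Definition~\ref{AWSC}), by bounding the local overlap-counting function at each scale and showing that it grows subexponentially (in fact only polylogarithmically) in the scale. Write $M=\max_{1\le i\le \ell}|m_i|$ and fix a common denominator $D\in\N$ with $a_i\in\frac1D\Z^d$ for all $i$. For a word $u=u_1\cdots u_n\in\{1,\ldots,\ell\}^n$ put $m_u=m_{u_1}\cdots m_{u_n}\in\Z$, so that $\phi_u(x)=x/m_u+b_u$ with $b_u=\phi_u(0)=\sum_{k=1}^n a_{u_k}/(m_{u_1}\cdots m_{u_{k-1}})$ (empty product $=1$). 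Multiplying through, $D\,m_u\,b_u=\sum_{k=1}^n D\,a_{u_k}\,(m_{u_k}\cdots m_{u_n})\in\Z^d$, so every translation lies on the lattice $b_u\in\frac1{D m_u}\Z^d$. Fix $R>0$ with $K\subset B(0,R)$. For $b\in(0,1)$ consider the usual section $\Lambda_b=\{u:\ |m_u|\ge 1/b>|m_{u^-}|\}$ (where $u^-$ drops the last symbol); for $u\in\Lambda_b$ one has $1/b\le|m_u|<M/b$, and since $\phi_u(K)\subset B(b_u,R/|m_u|)\subset B(b_u,Rb)$, any map with $\phi_u(K)\cap B(x,b)\neq\emptyset$ satisfies $b_u\in B(x,(1+R)b)$.

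I would then bound the number of distinct maps $\phi_u$ ($u\in\Lambda_b$) meeting a fixed ball $B(x,b)$ by splitting according to the value of the integer $m_u$. Two contributions appear. First, for a fixed value $m_u=M_0$ the admissible translations $b_u$ lie in $\frac1{DM_0}\Z^d\cap B(x,(1+R)b)$; since $|M_0|<M/b$ the lattice spacing $1/(D|M_0|)$ exceeds $b/(DM)$, which is comparable to the ball radius, so this intersection contains at most $(2(1+R)D|M_0|b+1)^d\le(2(1+R)DM+1)^d=:C_1$ points --- a constant independent of $b$. Second, the distinct values taken by $m_u$ are, up to sign, exactly products of the $|m_i|$, hence integers all of whose prime factors lie in the fixed finite set $S$ of primes dividing $m_1\cdots m_\ell$; among integers below $M/b$ there are at most $\prod_{p\in S}(1+\log(M/b)/\log p)=O((\log(1/b))^{|S|})$ such $S$-smooth numbers. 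Multiplying the two bounds (and the factor $2$ for the sign of $m_u$) shows that the number of distinct maps meeting $B(x,b)$ is $O((\log(1/b))^{|S|})$, uniformly in $x$.

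Consequently the overlap-counting function is polylogarithmic in $1/b$, a fortiori subexponential, so the limit defining the AWSC vanishes and $\Phi$ satisfies the AWSC. Both hypotheses enter decisively: the rationality of the $a_i$ forces the translations onto the lattice $\frac1{Dm_u}\Z^d$ (controlling the per-ratio count), while the integrality of the $m_i$ makes the admissible contraction denominators $S$-smooth (controlling the number of ratios). The main obstacle --- and the one genuinely new point compared with the homogeneous integer case --- is the second count: one must observe that within a single scale window the contraction ratios $1/|m_u|$, though numerous as words, realize only polylogarithmically many distinct integer denominators because they are smooth numbers over a fixed prime set; the matching of lattice spacing to scale then keeps each denominator's contribution bounded.
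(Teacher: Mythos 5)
Your proof is correct and follows essentially the same route as the paper: both decompose the maps in the scale-$n$ section by the value of the integer contraction denominator $m_u$, use the rationality of the $a_i$ to place the translations on the lattice $\frac{1}{Dm_u}\Z^d$ whose spacing is comparable to the ball radius (giving a bounded count per denominator), and observe that only polynomially many denominators occur. The only cosmetic difference is that the paper bounds the number of denominators by $(n+1)^\ell$ via the exponent vector of $m_u=\prod_i m_i^{\tau_i}$, whereas you count $S$-smooth integers below $M2^n$, which gives the same polynomial-in-$n$ conclusion.
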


Theorems \ref{thm-1.3}-\ref{thm-1.4} provide new examples of overlapping algebraic IFS to which the results of \cite{Feng2012} can be applied to get the validity of the multifratal formalism over $[\tau_\mu'(\infty), \tau_\mu'(0^+)]$.

  The proof of Theorem \ref{thm-1.4} is elementary and short, whilst the proofs of  Theorems \ref{thm-1.2} and \ref{thm-1.3} rely on a recent deep result of Shmerkin \cite{Shmerkin2019} (see Theorem \ref{thm-S}) on the $L^q$-spectrum of self-similar measures. Since the IFS $\Phi$ considered in Theorem~\ref{thm-1.3} might  have exact overlaps in the iterations, in order to apply Shmerkin's result  we need to construct certain higher dimensional affine IFS  which is ``dual'' to $\Phi$ and satisfies some  separation property. The approach  is a bit long and delicate.   We remark that Theorem \ref{thm-1.3}(ii) can be alternatively derived from the separation property of this dual IFS and a result of Hochman \cite{Hochman2014}; see Remark \ref{rem-5.2}.

  In Theorem \ref{thm-1.3}, we have assumed $\beta$  to be an algebraic integer. It remains an interesting question whether one can only assume $\beta$ to be an algebraic number.  Meanwhile, it would be desirable to know if  Theorems \ref{thm-1.3}-\ref{thm-1.4} could be extended to IFSs of similitudes on $\R^d$ under mild assumptions.

 The paper is organized as follows. In Section \ref{S-2}, we give some preliminaries  about self-similar sets and measures, and present the definitions of various separation conditions. In Section~\ref{S-3}, we present the aforementioned  result of Shmerkin.   In Sections \ref{S-4}-\ref{S-6}, we prove Theorems \ref{thm-1.2}-\ref{thm-1.4} respectively. In Appendix \ref{S-A}, we prove Theorem \ref{thm-1.1}  and present some of its extensions.

\section{Preliminaries and  separation conditions}
\label{S-2}

In this section, we introduce the basic concepts and definitions to be used in the paper.

A mapping $\phi:\; \R^d\to \R^d$ is called a {\it similarity} if $\phi(x)=r Ux+a$, where $r>0$, $U$ is a $d\times d$ orthogonal matrix and $a\in \R^d$; and in such case $r$ is called  the {\it contraction ratio} of $\phi$.

In the remaining part of this section, let $\Phi=\{\phi_i\}_{i=1}^\ell$ be  an {\it iterated function system (IFS) of similarities on $\R^d$}, i.e.,  a finite collection of contracting similarities on $\R^d$. The {\it attractor} $K$ of $\Phi$ is the unique non-empty compact subset of $\R^d$ such that
$$
K=\bigcup_{i=1}^\ell \phi_i(K).
$$
Alternatively, $K$ is called the {\it self-similar set} generated by $\Phi$.  It is well known (\cite{Hutchinson1981}) that for every probability vector ${\bf p}=(p_1,\ldots,p_\ell )$,  there is a unique Borel probability measure $\mu$ on $\R^d$ satisfying the following relation:
$$
\mu=\sum_{i=1}^\ell p_i\mu\circ\phi_i^{-1}.
$$
The measure $\mu$ is called the {\it self-similar measure} associated with $\Phi$ and ${\bf p}$, and is supported on $K$.

The dimension theory of self-similar sets and measures have been developed under various separation conditions.
 The best known separation condition for IFS is the open set condition (\cite{Hutchinson1981}). Recall that  $\Phi$ is said to satisfy the {\it open set condition} (OSC) if there exists an non-empty open set $U\subset \R^d$ such that $\phi_i(U)$,   $i=1,\ldots,\ell$,  are disjoint subsets of $U$.

 Next we recall the definitions of two other existing separation conditions (WSC and AWSC) in the literature.  Let $r_i$ denote the contraction ratio of $\phi_i$, $i=1,\ldots, \ell$. For $u=u_1\ldots u_k\in \{1,\ldots, \ell\}^k$, write $r_u:=r_{u_1}\cdots r_{u_k}$ and $\phi_u:=\phi_{u_1}\circ \cdots\circ \phi_{u_k}$. For $x\in \R^d$ and $r>0$, let $B(x,r)$ denote the closed ball of radius $r$ with center $x$.
 Set for $n\in \N$,
\begin{equation}
\label{e-w1}
W_n:=\left\{u_1\ldots u_k\in \{1,\ldots, \ell\}^k: \; k\geq 1,\; r_{u_1}\cdots r_{u_k}\leq 2^{-n}<r_{u_1}\cdots r_{u_{k-1}}\right\}.
\end{equation}
and
\begin{equation}
t_n:=\sup_{x\in \R^d}\#\{\phi_u:\; u\in W_n,\; \phi_u(K)\cap B(x, 2^{-n})\neq \emptyset\},
\end{equation}
where $\#$ stands for  cardinality.

\begin{de}\label{AWSC}
We say that $\Phi$ satisfies the weak separation condition (WSC) if the sequence $(t_n)$ is bounded; and say that $\Phi$ satisfies the asymptotically weak separation condition
(AWSC) if
$
\lim_{n\to \infty}\frac{1}{n}\log t_n=0.
$
\end{de}

The notions of WSC and AWSC were  introduced respectively in \cite{LauNgai1999} and \cite{Feng2007}.  Finally, we present the definition of the exponential separation condition (ESC) introduced in \cite{Hochman2014, Hochman2019+}. Define the distance between similarities $\psi=rU+a$ ad $\psi'=r'U'+a'$ by
$$
\rho(\psi, \psi')=|\log r-\log r'|+\|U-U'\|+\|a-a'\|,
$$
where $\|\cdot\|$ denotes the Euclidean or operator norm as appropriate.  For the IFS $\Phi=\{\phi_i\}_{i=1}^\ell$, set for $n\in \N$,
$$
\Delta_n=\min\{\rho(\phi_u, \phi_v):\; u, v\in \{1,\ldots, \ell\}^n,\; u\neq v\}.
$$

\begin{de}
\label{ESC}
Say that $\Phi$ satisfies the exponential separation condition (ESC) if there exists $c\in (0,1)$ such that $\Delta_n\geq c^n$  for infinitely many $n$.
\end{de}

For completion, we introduce the following.
\begin{de}
\label{WESC}
Say that $\Phi$ satisfies the weak exponential separation condition (WESC) if there exists $c\in (0,1)$ such that  $\widetilde{\Delta}_n\geq c^n$  for infinitely many $n$, where $$
\widetilde{\Delta}_n=\min\{\rho(\phi_u, \phi_v):\; u, v\in \{1,\ldots, \ell\}^n,\; \phi_u\neq \phi_v\}.
$$
\end{de}

 It is worth pointing out that the WESC is satisfied by all IFSs on $\R$ defined by algebraic parameters (see \cite[Theorem 1.5]{Hochman2014}).

  One has the implications $\mbox{OSC}\Longrightarrow\mbox{WSC}\Longrightarrow\mbox{AWSC}\Longrightarrow\mbox{WESC}$ and $\mbox{OSC}\Longrightarrow\mbox{ESC}\Longrightarrow\mbox{WESC}$.  The  implication $\mbox{OSC}\Longrightarrow\mbox{WSC}$ was proved in \cite{Nguyen2002}, while the other ones are easily checked from the definitions. We remark that
the conditions of  WSC, AWSC and WESC allow {\it exact overlaps} in the iterations whilst the ESC does not allow that. Recall that $\Phi$ is said to have an exact overlap in the iterations if $\Delta_n=0$ for some $n\in \N$, or equivalently, $\phi_u=\phi_v$ for some $u, v\in \bigcup_{n=1}^\infty\{1,\ldots \ell\}^n$ with $u\neq v$.

\section{Shmerkin's result on the $L^q$ spectrum of self-similar measures}
\label{S-3}
Here we present the  following result of Shmerkin on  the $L^q$-spectrum of self-similar measures on the line.
\begin{thm}[\cite{Shmerkin2019}]
\label{thm-S}
Let $\Phi=\{\phi_i\}_{i=1}^\ell$ be an IFS on $\R$ with ratios $r_1,\ldots, r_\ell$. Let $\mu$ be the self-similar measure generated by $\Phi$ and a given probability vector $(p_1,\ldots, p_\ell)$.
Then \begin{itemize}
\item[(i)] If $\Phi$ satisfies the ESC, then for each $q\geq 1$, $\tau_\mu(q)=\min\{q-1, T(q)\}$, where $T$ is defined by
$$
\sum_{i=1}^\ell p_i^q |r_i|^{-T(q)}=1.
$$
\item[(ii)] If $\Phi$ is  algebraic in the sense that  the ratios and the translation parts of $\phi_i$  are all algebraic numbers, and moreover suppose that $r_1=\cdots=r_\ell=r$,  then for each $q\geq 1$,
$\tau_\mu(q)=\min\{q-1, \lim_{n\to \infty}T_n(q)\}$, where
$T_n$ is defined by
$$
\sum_{u\in \Sigma_n/\sim}\tilde{p}_u^q |r|^{-T_n(q)}=1.
$$
Here $\sim$ is the equivalence relation on $\Sigma_n$ defined by $u\sim v$ if $\phi_u=\phi_v$, and for $u\in \Sigma_n$, $\tilde{p}_u:=\sum_{v\in \Sigma_n: \;\phi_v=\phi_u} p_v$.
\end{itemize}
\end{thm}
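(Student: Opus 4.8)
The plan is to isolate the soft upper bound from the deep lower bound, and to carry the weight of the argument on Shmerkin's inverse theorem for the $L^q$ norms of convolutions.

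First I would observe that the upper bound $\tau_\mu(q)\le \min\{q-1,T(q)\}$ for $q\ge 1$ holds without any separation hypothesis. The inequality $\tau_\mu(q)\le q-1$ comes from bounding the spectrum by the box dimension of the support: taking a maximal $r$-separated family in $\mathrm{supp}\,\mu$ and applying Jensen's inequality to the at most $O(r^{-1})$ disjoint balls gives $\Theta_\mu(q,r)\ge c\,r^{(q-1)\overline{\dim}_B(\mathrm{supp}\,\mu)}$, and $\overline{\dim}_B(\mathrm{supp}\,\mu)\le 1$ since we are in $\R$. The inequality $\tau_\mu(q)\le T(q)$ comes from the self-similar relation $\mu=\sum_{u\in\Sigma_n}p_u\,\mu\circ\phi_u^{-1}$: placing balls of radius $r_u$ at the points $\phi_u(x_0)$ (with $x_0$ the fixed point of one map) and using submultiplicativity of $\Theta_\mu(q,\cdot)$ along the scales $r_u$ yields the defining relation for $T$. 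Hence the entire content of (i) is the matching lower bound
$$
\tau_\mu(q)\ge \min\{q-1,T(q)\},\qquad q\ge 1,
$$
and I would concentrate all effort there.

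Second, I would set up the convolution structure. Discretizing $\mu$ at a fine dyadic scale and using the generation-$n$ self-similar form, one realizes (an $L^q$-approximation of) $\mu$ as a convolution of the configuration $\sum_{u\in\Sigma_n}p_u\,\delta_{\phi_u(0)}$ on the translation parts with a rescaled copy of $\mu$, and this holds at every scale. This produces a submultiplicative cocycle in $r$ whose exponential rate is exactly $\tau_\mu(q)$ and whose ``trivial'' rate, attained when no cancellation occurs in the convolution, is precisely $\min\{q-1,T(q)\}$. The engine of the proof is then the inverse theorem for $L^q$ norms of convolutions: if $\|\mu*\nu\|_q$, suitably discretized, fails to decay by the full amount predicted by the individual $L^q$ behaviours, then at a positive proportion of scales $\mu$ and $\nu$ must both be concentrated on unions of short arithmetic-progression-like pieces. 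This multiscale additive-structure statement, proved by a Balog--Szemer\'edi--Gowers and entropy-increment analysis in the spirit of Hochman's inverse theorem, is the single hardest ingredient and is what makes the result deep.

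Third, I would run the dichotomy. Either the moment sums saturate the trivial rate, giving the desired lower bound directly, or else the inverse theorem forces $\mu$ to carry the rigid multiscale structure above along a full-density set of scales. In the latter case the translation parts $\phi_u(0)$ would have to collide or nearly collide exponentially often, so that $\Delta_n$ decays super-exponentially along many scales, contradicting the ESC, which guarantees $\Delta_n\ge c^n$ for infinitely many $n$. Feeding the exponential separation from those good scales into the inverse theorem and tracking the branching carefully is the main obstacle of the argument; the bookkeeping that converts ``no saturation'' into ``super-exponential collision of branches'' is delicate.

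Finally, for part (ii) I would reduce to (i) by quotienting out exact overlaps. Since $\Phi$ is algebraic it satisfies the WESC (Hochman), so although $\Phi$ itself may have exact overlaps, the finite system obtained at generation $n$ by identifying maps under $\sim$ and assigning them the aggregated weights $\tilde p_u$ is exponentially separated in the sense required to run the mechanism of part (i). Applying that mechanism to these reduced systems, together with the convergence of $T_n(q)$ and a continuity argument for the defining equations $\sum_{u\in\Sigma_n/\sim}\tilde p_u^{\,q}|r|^{-T_n(q)}=1$, would give $\tau_\mu(q)=\min\{q-1,\lim_{n\to\infty}T_n(q)\}$ for $q\ge 1$.
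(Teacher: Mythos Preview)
The paper does not prove this theorem at all: it is quoted verbatim from Shmerkin's work, with the sentence ``Part (i) of the above theorem is just Theorem~6.6 in \cite{Shmerkin2019}, whilst part (ii) is a direct consequence of \cite[Proposition~6.8]{Shmerkin2019}.'' There is nothing further to compare your attempt against.

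That said, your outline is a fair high-level description of how Shmerkin's proof actually proceeds: the soft upper bound, the convolution/cocycle structure, the $L^q$ inverse theorem as the engine, and the ESC entering to rule out the structured alternative. Two small corrections are worth noting. First, in Shmerkin's framework the argument is phrased for \emph{dynamically driven self-similar measures}, and part (ii) is not obtained by quotienting the $n$-th iterate and applying part (i) to a finite IFS; rather, the algebraic homogeneous IFS is recast as a model $(X,\mathbf{T},\Delta,\lambda)$ to which the general machinery (Shmerkin's Theorem~1.11/Proposition~6.8) applies directly, with the WESC supplying the separation input. Second, the phrase ``delicate bookkeeping'' understates matters: the inverse theorem for $L^q$ norms of convolutions and the subsequent multiscale analysis occupy the bulk of \cite{Shmerkin2019}, and your sketch, while pointing in the right direction, does not constitute a proof. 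For the purposes of the present paper, however, no proof is expected---the result is used as a black box.
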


 Part (i) of the above theorem is just Theorem 6.6 in \cite{Shmerkin2019},  whilst part (ii) is a direct consequence of \cite[Proposition 6.8]{Shmerkin2019}.   Note that in \cite{Shmerkin2019} Shmerkin obtained a more general statement for a class of measures called {\it dynamically driven self-similar measures} which includes, for instance, the convolutions  of  $\times 2$- and $\times 3$-invariant  self-similar measures on the 1-dimensional torus. This yields an affirmative answer to  Furstenberg's conjecture on the Hausdorff dimension of the intersections of  $\times 2$- and $\times 3$-invariant sets, as well as  a proof of a strong version of  Furstenberg's conjecture on the Hausdorff dimension of the algebraic sums of such sets (see \cite[Section~7]{Shmerkin2019}).

\section{The proof of Theorem \ref{thm-1.2}}
We will need the following lemma.
\begin{lem}
\label{lem-k1}
Let $\mu$ be a compactly supported measure on $\R^d$. Then for any $q>0$,
$$
\tau_\mu(q)\leq q\liminf_{r\to 0}\frac{\log \sup_{x\in \R^d} \mu(B(x, r))}{\log r}.
$$
\begin{proof}
It simply follows from the definition of $\tau_\mu(q)$ and the fact  that  $$\Theta_\mu(q, r)\geq \sup_{x\in \R^d} \mu(B(x, r))^q,$$
where $\Theta_\mu(q, r)$ is defined as in \eqref{e-tz1}.
\end{proof}
\end{lem}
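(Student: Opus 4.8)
The plan is to read the inequality straight off the definition of $\Theta_\mu(q,r)$ in \eqref{e-tz1}. The key observation is that a \emph{single} closed ball is an admissible family in that supremum: for any $y\in\mathrm{supp}(\mu)$ the one-element family $\{B(y,r)\}$ is trivially a family of pairwise disjoint closed balls of radius $r$ centred in the support, so $\Theta_\mu(q,r)\ge \mu(B(y,r))^q$. Since $q>0$, the map $t\mapsto t^q$ is increasing, and taking the supremum over $y\in\mathrm{supp}(\mu)$ yields
$$
\Theta_\mu(q,r)\ \ge\ \Big(\sup_{y\in\mathrm{supp}(\mu)}\mu(B(y,r))\Big)^q .
$$

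The only point that needs care, and the one I expect to be the sole genuine obstacle, is that the statement takes the supremum over \emph{all} $x\in\R^d$ rather than over $\mathrm{supp}(\mu)$, and these two quantities need not coincide at a fixed scale $r$. To bridge the gap I would argue as follows: if $\mu(B(x,r))>0$ then $B(x,r)$ must meet $\mathrm{supp}(\mu)$, so one may pick $y\in\mathrm{supp}(\mu)\cap B(x,r)$; then $B(x,r)\subseteq B(y,2r)$, whence $\mu(B(x,r))\le \mu(B(y,2r))\le \sup_{z\in\mathrm{supp}(\mu)}\mu(B(z,2r))$. Combining this with the displayed inequality applied at scale $2r$ gives
$$
\sup_{x\in\R^d}\mu(B(x,r))\ \le\ \Theta_\mu(q,2r)^{1/q}
$$
for all $r>0$. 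This harmless doubling of the radius is invisible in the logarithmic limit, and if one is content to take the supremum over $\mathrm{supp}(\mu)$ (as the author's one-line argument implicitly does) the step can be skipped.

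To conclude I would take logarithms, divide by $\log r<0$ (which reverses the inequality), and pass to the lower limit as $r\to 0^+$. Writing $M(r):=\sup_{x\in\R^d}\mu(B(x,r))$, the previous display gives $q\log M(r)\le \log\Theta_\mu(q,2r)$, hence
$$
\frac{q\log M(r)}{\log r}\ \ge\ \frac{\log\Theta_\mu(q,2r)}{\log(2r)}\cdot\frac{\log(2r)}{\log r}.
$$
Since $\log(2r)/\log r\to 1$ as $r\to 0^+$, the lower limit of the right-hand side is exactly $\tau_\mu(q)$; taking $\liminf_{r\to0^+}$ and pulling the positive constant $q$ out of the liminf on the left then yields $\tau_\mu(q)\le q\,\liminf_{r\to0}\frac{\log M(r)}{\log r}$, which is the asserted bound. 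Everything here is routine once the single-ball observation and the support-versus-$\R^d$ comparison are in place.
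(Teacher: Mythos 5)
Your proof is correct and follows essentially the same route as the paper's one-line argument: a single closed ball centred at a point of the support is an admissible family in \eqref{e-tz1}, so $\Theta_\mu(q,r)\ge \mu(B(y,r))^q$ for $y\in\mathrm{supp}(\mu)$, and the bound passes to the $\liminf$. Your additional step comparing $\sup_{x\in\R^d}\mu(B(x,r))$ with $\sup_{y\in\mathrm{supp}(\mu)}\mu(B(y,2r))$ addresses a point the paper's stated inequality $\Theta_\mu(q,r)\ge\sup_{x\in\R^d}\mu(B(x,r))^q$ silently glosses over (at a fixed scale the two suprema need not agree), and the doubling of the radius is, as you note, invisible in the logarithmic limit.
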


\begin{proof}[Proof of Theorem \ref{thm-1.2}]
Let $s$ be the unique number so that $\sum_{i=1}^\ell r_i^s=1$.  Let $m$ be the infinite Bernoulli product measure on $\Sigma=\{1,\ldots, \ell\}^\N$ associated with the probability weight $(r_1^s, \ldots, r_\ell^s)$. Let $W_n$, $n\in \N$, be defined as in \eqref{e-w1}. Notice that for each $n$,  $W_n$ is a section of $\Sigma$ in the sense that
$\{[u]:\; u\in W_n\}$ is a partition of $\Sigma$, where $[u]$ stands for the cylinder set associated to $u$, i.e.
$$[u]=\left\{(x_n)_{n=1}^\infty\in \Sigma:\; x_i=u_i \mbox{ for } 1\leq i\leq k\right\}\; \mbox{ for }u=u_1\ldots u_k.
$$
Hence
\begin{equation}
\label{e-b1}
\sum_{u\in W_n}m([u])=1.
\end{equation}
 Furthermore by the definition of $W_n$,  for each $u\in W_n$ we have
\begin{equation}
\label{e-b2}
2^{-ns}\min_{1\leq i\leq \ell} r_i^s<m([u])\leq 2^{-ns}.
\end{equation}
Combining \eqref{e-b1} with \eqref{e-b2} yields that
\begin{equation}
\label{e-k10}
\#W_n\geq 2^{ns},\quad n\in \N.
\end{equation}

Since $\Phi$ satisfies the ESC, there are no exact overlaps (i.e., $\phi_u\neq \phi_v$ for any distinct words $u$ and $v$) in the iterations of $\Phi$.   Set
\begin{equation}
\label{e-b3}
t_n:=\sup_{x\in \R^d}\#\{u\in W_n: \phi_u(K)\cap [x-2^{-n}, x+2^{-n}]\neq \emptyset\}.
\end{equation}
To prove Theorem \ref{thm-1.2},  it is equivalent to show that $\lim_{n\to \infty}\frac{1}{n}\log t_n=0$ if and only if $r_1+\cdots+r_\ell\leq 1$.

First assume that $r_1+\cdots+r_\ell>1$. In this case  $s>1$. Cover  $K$  by $2^n \mbox{diam}(K)+1$  intervals of length $2^{-n}$.  For each $u\in W_n$, $\phi_u(K)$ intersects at least one of these intervals. Conversely by \eqref{e-b3}, every such interval intersects $\phi_u(K)$ for at most $t_n$ elements $u$ in $W_n$. Hence
$$\#W_n\leq t_n (2^n \mbox{diam}(K)+1).$$
So by \eqref{e-k10},
$$t_n\geq  \frac{\#W_n}{2^n \mbox{diam}(K)+1}\geq  \frac{2^{ns}}{2^n \mbox{diam}(K)+1},
$$
 leading to the inequality $$\liminf_{n\to \infty}\frac{1}{n}\log t_n\geq  (s-1) \log 2>0,$$ and so $\Phi$ does not satisfy the AWSC.

Next assume that $r_1+\cdots+r_\ell\leq 1$. In this case, $s\leq 1$. Let  $\mu$ denote the self-similar measure generated by $\Phi$ and the probability vector $(r_1^{s}, \ldots, r_\ell^{s})$.
Take $R>\mbox{diam}(K)+1$.  Let $y\in \R$.  Clearly for each $u\in W_n$ with $\phi_u(K)\cap [y-2^{-n},  y+2^{-n}]\neq \emptyset$, we have $$\mbox{diam}(\phi_u(K))\leq 2^{-n}\mbox{diam}(K)\leq 2^{-n} (R-1)$$ and so
\begin{equation}\label{e-b5.0}
\phi_u(K)\subset [y -2^{-n}R,  y+ 2^{-n}R].
\end{equation}
It follows that
\begin{equation}
\label{e-b4}
\begin{split}
&\sup_{x\in \R}  \#\{u\in W_n:\; \phi_u(K)\subset [x-2^{-n}R, x+2^{-n}R]\}\\
&\mbox{}\quad  \geq \sup_{x\in \R}  \#\{u\in W_n:\; \phi_u(K)\cap  [x-2^{-n}, x+2^{-n}]\neq \emptyset\} \\
&\mbox{}\quad  =t_n.
\end{split}
\end{equation}
Since $W_n$ is a section of $\Sigma$, it follows that $$\mu=\sum_{u\in W_n} r_u^s \mu\circ \phi_u^{-1}$$
 (see e.g. \cite[Lemma~2.2.4]{BishopPeres17} for a proof),  hence
\begin{eqnarray*}
&&\sup_{x\in \R}\mu([x -2^{-n}R, x+ 2^{-n}R])\\
& &\mbox{}\quad=  \sup_{x\in \R} \sum_{u\in W_n}r_u^s \mu \left(\phi_u^{-1}[x -2^{-n}R, x+ 2^{-n}R]\right)\\
&&\mbox{}\quad\geq C2^{-ns}  \sup_{x\in \R}  \sum_{u\in W_n} \mu\left(\phi_u^{-1}[x -2^{-n}R, x+ 2^{-n}R]\right)\\
&&\mbox{} \quad\geq C 2^{-ns} \sup_{x\in \R}  \#\{u\in W_n:\;  K\subset \phi_u^{-1}[x-2^{-n}R, x+2^{-n}R]\}\\
 &&\mbox{}\quad\geq C 2^{-ns} t_n\qquad \mbox{(by \eqref{e-b4})},
\end{eqnarray*}
where $C:=\min_{1\leq i\leq \ell}r_i^s$. Applying Lemma \ref{lem-k1}  yields that
\begin{equation}
\label{e-b5}
\tau_\mu(q)\leq q \liminf_{n\to \infty} \frac{\log(C 2^{-ns} t_n) }{\log(2^{-n}R)}\leq q (s-\gamma),\quad q>0,
\end{equation}
where $\gamma=\displaystyle\limsup_{n\to \infty}\frac{\log t_n}{n\log 2}$.

Suppose on the contrary that $\Phi$ does not satisfy the AWSC.  Then $\gamma>0$. Since $\sum_{i=1}^\ell  (r_i^{s})^qr_i^{-s(q-1)}=1$, by Theorem \ref{thm-S}(i),  $$\tau_\mu(q)=\min\{q-1, s(q-1)\}=s(q-1)$$ for each $q>1$. This contradicts \eqref{e-b5} since $q(s-\gamma)<s(q-1)$ for sufficiently large~$q$.
\end{proof}

\section{The proof of Theorem \ref{thm-1.3}}
\label{S-4}

Throughout this section, let $\Phi=\{\phi_i\}_{i=1}^\ell$ be a homogeneous IFS on $\R$ of the form $$\phi_i(x)=\frac{x}{\beta}+a_i,\qquad i=1,\ldots, \ell,$$
where $\beta$ is an algebraic integer with $|\beta|>1$ and $a_i$ are algebraic numbers. Let $K$ denote the attractor of $\Phi$.
Set $\Sigma_n=\{1,\ldots, \ell\}^n$ for $n\in \N$ and $\Sigma=\{1,\ldots, \ell\}^\N$. We first state a key result that will be needed in the proof of Theorem \ref{thm-1.3}.

\begin{pro}
\label{pro-a1}
There exist an integer $k\geq 1$ and a homogeneous affine IFS $\Psi=\{\psi_i(x)=Ax+b_i\}_{i=1}^\ell$ on $\R^k$ so that the following properties hold:
\begin{itemize}
\item[(i)] For any finite words $I, J$ over $\{1,\ldots, \ell\}$, $\psi_I=\psi_J$ if and only if $\phi_I=\phi_J$.
\item[(ii)] Let $\widetilde{K}$ denote the attractor of $\Psi$. Set for $n\in \N$,
\begin{equation}\label{e-k1}
\kappa_n:=\sup_{z\in \R^k}\#\left\{\psi_J:\; J\in \Sigma_n,\; \psi_J(\widetilde{K})\cap A^n(B(z, 1))\neq \emptyset\right\},
\end{equation}
where   $B(z, 1)$ stands for the closed unit ball centered at $z$.
Then $$\lim_{n\to \infty}\frac{1}{n}\log \kappa_n=0.$$
 Moreover, the sequence $(\kappa_n)$ is  bounded if no conjugates of $\beta$ lie on the unit circle $\{|z|=1\}$.
\end{itemize}
\end{pro}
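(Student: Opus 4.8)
The plan is to realize $\Phi$ inside a number field and pass to the Galois conjugates of $\beta$ of modulus $>1$. Let $F=\mathbb{Q}(\beta,a_1,\dots,a_\ell)$ with $[F:\mathbb{Q}]=N$, let $\sigma_1=\mathrm{id},\sigma_2,\dots,\sigma_N\colon F\to\mathbb{C}$ be its embeddings, and let $\iota\colon F\to\mathbb{R}^N$ be the associated Minkowski embedding (grouping complex-conjugate pairs so as to land in $\mathbb{R}^N$). Under $\iota$, multiplication by $\gamma\in F$ becomes a real matrix $M_\gamma$ whose complex eigenvalues are the $\sigma_j(\gamma)$. Since $\beta\in\mathcal{O}_F$ we have $\beta\mathcal{O}_F\subseteq\mathcal{O}_F$, so $M_\beta$ preserves the lattice $\Lambda:=\iota(\mathcal{O}_F)$; and if $Q\in\mathbb{N}$ clears the denominators of the $a_i$ (so that $Qa_i\in\mathcal{O}_F$), then each $\iota(a_i)\in\tfrac1Q\Lambda$. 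As $\beta$ is separable, $M_\beta$ is semisimple, and $\mathbb{R}^N$ splits into $M_\beta$-invariant subspaces $V_>\oplus V_=\oplus V_<$ according to whether $|\sigma_j(\beta)|$ is $>1$, $=1$, or $<1$; write $\pi_>,\pi_=,\pi_<$ for the corresponding projections. I then \emph{define} the dual system by $k=\dim V_>$, $A:=M_{1/\beta}|_{V_>}$, $b_i:=\pi_>(\iota(a_i))$, and $\psi_i(x)=Ax+b_i$ on $V_>\cong\mathbb{R}^k$. The identity embedding has $|\sigma_1(\beta)|=|\beta|>1$, so $V_>\neq\{0\}$ (hence $k\ge 1$) and the $\sigma_1$-coordinate survives in $V_>$; moreover every eigenvalue $1/\sigma_j(\beta)$ of $A$ has modulus $<1$, so $A$ has spectral radius $<1$, $\Psi=\{\psi_i\}$ is contracting in a suitable norm, and a compact attractor $\widetilde{K}\subset V_>$ exists.

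For property (i), note $\psi_I(x)=A^{|I|}x+\pi_>(\tau(I))$, where $\tau(I):=\iota(\text{translation part of }\phi_I)=\sum_{j}M_{1/\beta}^{\,j-1}\iota(a_{i_j})$ is the full translation vector. Distinct lengths are separated by the linear parts, since the $\sigma_1$-eigenvalue $\beta^{-n}$ of $A^n$ already determines $n$ (as $|\beta|>1$). For equal lengths, $\psi_I=\psi_J$ means $\pi_>(\tau(I))=\pi_>(\tau(J))$; reading off the $\sigma_1$-slot and using $\sigma_1=\mathrm{id}$ recovers equality of the genuine translation parts, i.e. $\phi_I=\phi_J$, and the converse is immediate.

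For property (ii), fix $z$ and $n$ and set $\eta_J:=M_\beta^{\,n}\tau(J)=\sum_{m=1}^n M_\beta^{\,m}\,\iota(a_{i_{n-m+1}})$. If $\psi_J(\widetilde{K})\cap A^n(B(z,1))\neq\emptyset$, then applying $A^{-n}=M_\beta^{\,n}|_{V_>}$ gives $\pi_>(\eta_J)\in B(z,1+D)$ with $D:=\sup_{p\in\widetilde{K}}\|p\|$. By (i), distinct $\psi_J$ correspond to distinct $\tau(J)$, hence to distinct $\eta_J$, so $\kappa_n$ is bounded by the number of admissible $\eta_J$. Now $\eta_J\in\tfrac1Q\Lambda$ (as $M_\beta$ preserves $\Lambda$ and $Q\iota(a_i)\in\Lambda$), and along the three blocks one has: $\|\pi_>(\eta_J)\|=O(1)$ by the constraint; $\|\pi_<(\eta_J)\|\le\sum_{m\ge1}\|M_\beta^{\,m}|_{V_<}\|\,L=O(1)$ since $M_\beta$ contracts on $V_<$ (where $L:=\max_i\|\iota(a_i)\|$); and $\|\pi_=(\eta_J)\|\le nL=O(n)$ since $M_\beta$ is an isometry on $V_=$. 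Thus all admissible $\eta_J$ lie in a box that is bounded in $V_>\oplus V_<$ and of side $O(n)$ in $V_=$, so the number of points of the full-rank lattice $\tfrac1Q\Lambda$ in it is $O(n^{\dim V_=})$. Hence $\kappa_n=O(n^{\dim V_=})$, giving $\frac1n\log\kappa_n\to 0$, and $\kappa_n=O(1)$ precisely when $\dim V_==0$, i.e. when no conjugate of $\beta$ lies on the unit circle.

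The genuinely delicate point, and the main obstacle, is the \emph{choice} of the dual system — namely projecting onto the block $V_>$ on which $1/\beta$ is contracting. This single choice must do three jobs at once: make $A$ a contraction so that $\widetilde{K}$ exists, retain the identity embedding so that faithfulness (i) survives the projection, and — after multiplying back by $M_\beta^{\,n}$ to convert the scaled window $A^n(B(z,1))$ into a fixed-size window — confine $\eta_J$ to a lattice box whose only unbounded directions are the unit-circle block $V_=$, which is exactly what produces polynomial rather than bounded growth. The remaining care is bookkeeping: verifying the invariance $\beta\mathcal{O}_F\subseteq\mathcal{O}_F$, the common denominator $Q$, and the semisimplicity of $M_\beta$ that underlies the three norm estimates.
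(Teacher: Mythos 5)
Your construction of the dual system is the same as the paper's in all essentials: both pass to the Galois conjugates of $\beta$ and build $\Psi$ out of exactly those conjugates of modulus greater than one (the paper realizes this concretely as a diagonal IFS on $\mathbb{C}^m$ indexed by the conjugates $\lambda_j$ of a primitive element with $|f(\lambda_j)|>1$, which is your $V_>$ in real form), and your argument is correct. Where you genuinely diverge is in the execution of both halves. For property (i), you simply read off the identity-embedding coordinate of $\pi_>(\tau(I))$ to recover the translation part of $\phi_I$; the paper instead proves the Galois-invariance property ({\bf P}) via Lemma \ref{lem-algebraic}(iv) (a rational-coefficient polynomial vanishing at one conjugate of $\lambda$ vanishes at all), which it then reuses later. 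For property (ii), the paper follows Garsia \cite{Garsia1962}: it shows the product $\prod_{j=1}^d(t_{u,j}-t_{v,j})$ lies in $M^{-d}\mathbb{Z}\setminus\{0\}$, bounds the factors coming from the non-expanding conjugates by $O(n^{m'-m})$, deduces the quantitative separation \eqref{e-claim} of order $n^{-(m'/m-1)}$ between distinct $A^{-n}\psi_u(0)$, and concludes by a packing argument in a bounded window; you instead observe that $Q\eta_J$ lies in the $M_\beta$-invariant lattice $\iota(\mathcal{O}_F)$ and count lattice points directly in a box that is bounded in the $V_>\oplus V_<$ directions and of side $O(n)$ in $V_=$. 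The two mechanisms exploit the same integrality, but yours trades the separation constant and packing for a cleaner count (exponent $\dim V_=$ versus the paper's $2(m'-m)$ --- both polynomial, and both detect boundedness exactly when no conjugate lies on the unit circle); the paper's version is more self-contained in that it needs only elementary facts about algebraic numbers and no Minkowski-embedding or lattice-covolume machinery, and its explicit separation estimate \eqref{e-claim} is of independent use. The only points in your write-up that deserve an explicit word are the ones you already flag: semisimplicity of $M_\beta$ (needed so that $M_\beta^m|_{V_=}$ stays bounded --- though even polynomial Jordan growth would still yield a polynomial bound on $\kappa_n$), and the existence of the common denominator $Q$, which is the paper's Lemma \ref{lem-algebraic}(ii).
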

The above result says that one can always find an IFS $\Psi$ ``dual'' to $\Phi$, so that $\Psi$ satisfies certain asymptotically weak separation property.  The proof of this result will be postponed until the end of this section. Below we first apply it to prove Theorem  \ref{thm-1.3}.

\begin{proof}[Proof of Theorem \ref{thm-1.3}]
Recall that $N_n=\#\{\phi_u:\; u\in \Sigma_n\}$ for $n\in \N$. It is easy to check that the sequence $(N_n)$ is sub-multiplicative, i.e. $N_{n+m}\leq N_nN_m$ for all $n,m\in \N$. Hence the following limit exists:  $$
s:=\lim_{n\to \infty}\frac{\log N_n}{n\log |\beta|}.$$
  Set for $n\in \N$,
\begin{equation}\label{e-k2}
t_n:=\sup_{x\in \R}\#\{\phi_u:\; u\in \Sigma_n,\; \phi_u(K)\cap[x-|\beta|^{-n}, x+|\beta|^{-n}]\neq \emptyset\}.
\end{equation}
 Write
$$
\gamma:=\limsup_{n\to \infty}\frac{1}{n}\log t_n.
$$
First notice that $t_{n+1}\geq t_n$ for each $n$. To see this, for given $n\in \N$ let $x\in \R$ be a point attaining the supremum in \eqref{e-k2} and let $S_1,\ldots, S_{t_n}$ be $t_n$ different elements in $\{\phi_u:\; u\in \Sigma_n\}$ so that $S_i(K) \cap[x-|\beta|^{-n}, x+|\beta|^{-n}]\neq \emptyset$. Then $\phi_1\circ S_1,\ldots, \phi_1\circ S_{t_n}$ are different elements in $\{\phi_u:\; u\in \Sigma_{n+1}\}$ so that $$(\phi_1\circ S_i)(K)\cap [\phi_1(x)-|\beta|^{-n-1}, \phi_1(x)+|\beta|^{-n-1}]\neq \emptyset,\quad  i=1,\ldots, t_n,$$ hence by definition,  $t_{n+1}\geq t_n$.
  As a consequence,  for each $p\in \N$,
\begin{equation}
\label{e-k3}
\limsup_{n\to \infty}\frac{1}{np}\log t_{np}=\limsup_{n\to \infty}\frac{1}{n}\log t_n=\gamma.
\end{equation}
Below we divide the remaining part of our proof  into 3 steps.

{\sl Step 1. $\Phi$ does not satisfy the AWSC if $s>1$.}

To see this, assume that $s>1$. For $n\in \N$, cover $K$ by $|\beta|^n \mbox{diam}(K)+1$ intervals with length $|\beta|^{-n}$.  By the pigeon-hole principle, one of these intervals intersects $\phi_u(K)$ for at least $\frac{N_n}{|\beta|^n {\rm diam}(K)+1}$ many different maps $\phi_u$ in $\{\phi_u:\; u\in \Sigma_n\}$. Therefore we have
$$t_n\geq \frac{N_n}{|\beta|^n \mbox{diam}(K)+1},$$
which implies that $$
\gamma=\limsup_{n\to \infty}\frac{\log t_n}{n}\geq \limsup_{n\to \infty}\frac{\log (N_n/|\beta|^n)}{n}\geq (s-1)\log |\beta|>0,$$
 and so $\Phi$ does not satisfy the AWSC.

{\sl Step 2. $\Phi$  satisfies the AWSC if $s\leq 1$.}

Let $\Psi=\{\psi_i(x)=Ax+b_i\}_{i=1}^\ell$ be the auxiliary affine IFS constructed in Proposition \ref{pro-a1} and let $(\kappa_n)$ be defined as in \eqref{e-k1}.

Fix a large integer $L$. Define an equivalence relation $\sim$ on $\Sigma_L$ by $u\sim v$ if $\phi_u=\phi_v$. Let $\underline{u}$ denote the equivalence class containing $u$. Denote $\phi_{\underline{u}}=\phi_u$ and $\psi_{\underline{u}}=\psi_u$. Set $$\mathcal J=\Sigma_L/\sim:=\{\underline{u}:\; u\in \Sigma_L\}.$$
Then $\# \mathcal J =N_L$.
Build two IFSs $\Phi_L$ and $\Psi_L$ by
$$
\Phi_L=\{\phi_{\underline{u}}:\; \underline{u}\in \mathcal J\},\quad \Psi_L=\{\psi_{\underline{u}}:\; \underline{u}\in \mathcal J\}.
$$
Clearly $\Phi_L$ has the attractor $K$, and $\Psi_L$ has the attractor $\widetilde{K}$. Let $\mu$ be the self-similar measure generated by $\Phi_L$ and the uniform probability vector
${\bf p}=(\frac{1}{N_L},\ldots, \frac{1}{N_L})$. Meanwhile let $\eta$ be the self-affine measure generated by $\Psi_L$ and ${\bf p}$.

For each $n\in \N$, define an equivalence relation $\sim$ on $\mathcal J^n$ by $u_1\ldots u_n\sim  v_1\ldots v_n$ if $\phi_{ u_1\ldots u_n}=\phi_{v_1\ldots v_n}$. For $w\in \mathcal J^n$, define
$$
\tilde{p}_w=\frac{1}{(N_L)^n}\#\{w'\in \mathcal J^n:\; \phi_{w'}=\phi_w\}.
$$
By similarity we have
$$
\mu=\frac{1}{(N_L)^n}\sum_{w\in \mathcal J^n} \mu\circ \phi_{w}^{-1}=\sum_{\underline{w}\in \mathcal J^n/\sim} \tilde{p}_w\mu\circ \phi_{w}^{-1},
$$
and so by property (i) in Proposition \ref{pro-a1},
\begin{equation}\label{e-k7'}
\eta=\frac{1}{(N_L)^n}\sum_{w\in \mathcal J^n} \eta\circ \psi_{w}^{-1}=\sum_{\underline{w}\in \mathcal J^n/\sim} \tilde{p}_w\eta\circ \psi_{w}^{-1}.
\end{equation}
We claim that there exists $M>0$ such that for all $n\in \N$ and $w\in \mathcal J^n$,
\begin{equation}
\label{e-k6}
\left(\frac{1}{N_L}\right)^n\leq  \tilde{p}_w\leq M\left(\frac{\kappa_L}{N_L}\right)^n.
\end{equation}
The first inequality in \eqref{e-k6} is trivial, which follows directly from the definition of $\tilde{p}_w$. To prove the other inequality we need to use the asymptotically weak separation like property of  $\Psi_L$.  Notice that
\begin{eqnarray*}
&\mbox{}&\sup_{z\in \R^k} \#\left\{u\in \mathcal J:\; \psi_u(\widetilde{K})\cap A^L (B(z,1))\neq \emptyset \right\}\\
&\mbox{}&\quad =\sup_{z\in \R^k} \#\left\{\psi_u:\; u\in \Sigma_L,\; \psi_u(\widetilde{K})\cap A^L (B(z,1))  \neq \emptyset\right\}\\
&\mbox{}&\quad = \kappa_L.
\end{eqnarray*}
Since $A$ is contracting, it follows that for each  $n\in \N$ and $z\in \R^k$, $$A^{nL} (B(z,1))\subset  A^L \left(B\left(A^{(n-1)L}z,1\right)\right)$$ and so,
\begin{eqnarray*}
&& \#\left\{u\in \mathcal J:\; \psi_u(\widetilde{K})\cap A^{nL} (B(z,1))\neq \emptyset \right\}\\
&&\mbox{}\quad \leq  \#\left\{\psi_u:\; u\in \Sigma_L,\; \psi_u(\widetilde{K})\cap A^L \left(B\left(A^{(n-1)L}z,1\right)\right)  \neq \emptyset\right\}\\
&&\mbox{}\quad  \leq  \kappa_L.
\end{eqnarray*}
Hence by the self-affinity of $\eta$, for each $n\in \N$ and $z\in \R^k$,
\begin{eqnarray*}
\label{e-k7}
\eta\left(A^{nL}(B(z,1)\right)&=& \frac{1}{N_L} \sum_{u\in \mathcal J:\; \widetilde{K}\cap  \psi_u^{-1} (A^{nL}(B(z,1)))\neq \emptyset }\eta\left(\psi_u^{-1}(A^{nL}(B(z,1)))\right)\\
&=&\frac{1}{N_L} \sum_{u\in \mathcal J:\; \psi_u(\widetilde{K})\cap A^{nL}(B(z,1))\neq \emptyset }\eta\left(\psi_u^{-1}(A^{nL}(B(z,1)))\right)\\
& \leq & \frac{\kappa_L}{N_L} \sup_{x\in \R^k} \eta\left(A^{(n-1)L}(B(x,1))\right),
\end{eqnarray*}
where in the last inequality we used the fact that for each $u\in {\mathcal J}$, the mapping $\psi_u^{-1}$ is of the form $A^{-L}x+c_u$, and so $\psi_u^{-1}(A^{nL}(B(z,1)))=A^{(n-1)L}(B(z',1))$ with $z'=z+A^{-(n-1)L}c_u$.
Iterating the above inequality yields
 \begin{equation}
 \label{e-star}
 \eta\left(A^{nL}(B(z,1))\right)\leq \left(\frac{\kappa_L}{N_L}\right)^n\sup_{x\in \R^k} \eta(B(x,1))\leq \left(\frac{\kappa_L}{N_L}\right)^n.
 \end{equation}
Let $M$ be the smallest number of closed unit balls that are needed to cover $\widetilde{K}$. Let $w\in \mathcal J^n$.  Since $\psi_w(\widetilde{K})$ can be covered by $M$ ellipsoids of the form $A^{nL}(B(z,1))$, from \eqref{e-star} we obtain that
$$
\eta(\psi_w(\widetilde{K}))\leq M\left(\frac{\kappa_L}{N_L}\right)^n.
$$
However  by the self-affinity property \eqref{e-k7'}, $$\eta(\psi_w(\widetilde{K}))\geq \tilde{p}_w \eta(\psi_w^{-1} (\psi_w(\widetilde{K})) =\tilde{p}_w,$$
which implies that   $\displaystyle \tilde{p}_w\leq M\left(\frac{\kappa_L}{N_L}\right)^n$. This completes the proof of \eqref{e-k6}.

Now we use \eqref{e-k6} to derive a lower bound for the $L^q$-spectrum of $\mu$. Applying Theorem \ref{thm-S}(ii) to the IFS $\Phi_L$ and $\mu$, we see that for any $q> 1$,
\begin{equation}
\label{e-k8}
\tau_\mu(q)=\min\{q-1, \lim_{n\to \infty} T_n(q)\},
\end{equation}
where $T_n(q)$ satisfies the equation $\sum_{\underline{w}\in \mathcal J^n/\sim}(\tilde{p}_w)^q |\beta|^{nL T_n(q)}=1$, i.e.
$$
T_n(q)=\frac{\log(\sum_{\underline{w}\in \mathcal J^n/\sim}(\tilde{p}_w)^q)}{-nL\log |\beta|}.
$$
Since the cardinality of $\mathcal J^n/\sim$ does not exceed $(N_L)^n$, applying \eqref{e-k6} to the above equality yields
\begin{eqnarray*}
T_n(q)&\geq & \frac{\log\left((N_L)^nM^q\left(\frac{\kappa_L}{N_L}\right)^{nq}\right)}{-nL\log |\beta|}\\
&=&q\left(\frac{\log N_L-\log \kappa_L}{L\log |\beta|}\right)-\frac{\log N_L}{L\log |\beta|}-\frac{q\log M}{nL\log |\beta|}.
 \end{eqnarray*}
 Letting $n\to \infty$ gives
 \begin{equation}
 \label{e-k9}
 \lim_{n\to \infty} T_n(q)\geq q\left(\frac{\log N_L-\log \kappa_L}{L\log |\beta|}\right)-\frac{\log N_L}{L\log |\beta|}\quad  \mbox{ for all } q>1.
 \end{equation}
Hence by \eqref{e-k8}, for each $q>1$,
\begin{equation}
\label{e-k8'}
\tau_\mu(q)\geq \min\left\{q-1, \; q\left(\frac{\log N_L-\log \kappa_L}{L\log |\beta|}\right)-\frac{\log N_L}{L\log |\beta|}\right\}.
\end{equation}

 Next we provide an upper bound for $\tau_\mu(q)$.  Recalling the definition of $(t_n)$ (see \eqref{e-k2}), we have
 \begin{eqnarray*}
 t_{nL}&=&\sup_{x\in \R}\#\{\phi_u:\; u\in \Sigma_{nL},\; \phi_u(K)\cap[x-|\beta|^{-nL}, x+|\beta|^{-nL}]\neq \emptyset\}\\
 &=& \sup_{x\in \R}\#\{\phi_u:\; u\in \mathcal J^n,\; \phi_u(K)\cap[x-|\beta|^{-nL}, x+|\beta|^{-nL}]\neq \emptyset\}.
 \end{eqnarray*}
 Pick a number $R>\mbox{diam}(K)+1$. Then for $u\in  \mathcal J^n$, $$\mbox{diam}(\phi_u(K))=|\beta|^{-nL}\mbox{diam}(K)\leq |\beta|^{-nL}(R-1),$$ hence if $\phi_u(K)\cap[x-|\beta|^{-nL}, x+|\beta|^{-nL}]\neq \emptyset$ for some $x\in \R^k$,  then
 $$
 \phi_u(K)\subset [x-|\beta|^{-nL}R, x+|\beta|^{-nL}R].
 $$
 The above observation shows that for each $x\in \R^k$,
  \begin{eqnarray*}
 &&\mu([x-|\beta|^{-nL}R, x+|\beta|^{-nL}R])\\
 &&\quad =\frac{1}{(N_L)^n}\sum_{w\in \mathcal J^n} \mu\circ \phi_{w}^{-1}([x-|\beta|^{-nL}R, x+|\beta|^{-nL}R])\\
 &&\quad \geq\frac{1}{(N_L)^n}\#\{w\in \mathcal J^n:\; \phi_w(K)\subset [x-|\beta|^{-nL}R, x+|\beta|^{-nL}R]\}\\
 &&\quad \geq\frac{1}{(N_L)^n}\#\{w\in \mathcal J^n:\; \phi_w(K)\cap [x-|\beta|^{-nL}, x+|\beta|^{-nL}]\neq \emptyset\}\\
 &&\quad  \geq\frac{1}{(N_L)^n}\#\{\phi_w:\; w\in  \Sigma_{nL},\; \phi_w(K)\cap [x-|\beta|^{-nL}, x+|\beta|^{-nL}]\neq \emptyset\}.
\end{eqnarray*}
 This implies that
 $$
 \sup_{x\in \R} \mu([x-|\beta|^{-nL}R, x+|\beta|^{-nL}R])\geq t_{nL}{(N_L)^{-n}}.
 $$
 By Lemma \ref{lem-k1}, for each $q>0$,
\begin{equation}\label{e-k4''}
\begin{split}
 \tau_\mu(q)& \leq  q\cdot \liminf_{n\to \infty} \frac{n\log N_L-\log t_{nL}}{nL\log |\beta|-\log R} \\
  & = q\left(\frac{\log N_L}{L\log |\beta|}-\frac{\gamma}{\log |\beta|} \right)\qquad (\mbox{by \eqref{e-k3}}).
 \end{split}
 \end{equation}

 Now we are ready to prove the claim that $\Phi$ satisfies the AWSC if $s\leq 1$. Suppose on the contrary that $s\leq 1$ but $\Phi$ does not satisfy the AWSC. Then $\gamma>0$.
 Since $\lim_{n\to \infty}\frac1n\log \kappa_n=0$ and $s=\displaystyle \lim_{n\to \infty}\frac{\log N_n}{n\log |\beta|}\leq 1$, we may assume that $L$ is large enough so that
\begin{equation}
\label{e-k4}
\kappa_L<e^{\gamma L/2}
\end{equation}
and
\begin{equation}
\label{e-k4'}
\frac{\log N_L}{L\log |\beta|}-\frac{\gamma}{\log |\beta|}<1.
\end{equation}
 Combining  \eqref{e-k4'} and \eqref{e-k4''} yields that
 \begin{equation}\label{e-kk}
 \tau_\mu(q)\leq q\left(\frac{\log N_L}{L\log |\beta|}-\frac{\gamma}{\log |\beta|} \right)<q-1 \quad \mbox{for sufficiently large $q$}.
 \end{equation}
By \eqref{e-kk}, \eqref{e-k8'} and \eqref{e-k4}, for sufficiently large $q$,
$$
\tau_\mu(q)\geq  q\left(\frac{\log N_L-\log \kappa_L}{L\log |\beta|}\right)-\frac{\log N_L}{L\log |\beta|}\geq q\left(\frac{\log N_L}{L\log |\beta|}-\frac{\gamma}{2 \log |\beta|} \right),
$$
which contradicts \eqref{e-kk}.  This completes the proof of Step 2.

  {\sl Step 3. $\dim_HK=\min\{1, s\}$.}

  Clearly  $\dim_HK\leq \min\{1, s\}$. We only need to prove the converse inequality. For this purpose, let us estimate $\dim_H\mu$ for the self-similar measure $\mu$ constructed in Step 2. It is well known  that for each $q>1$, $\dim_H\mu\geq \frac{\tau_\mu(q)}{q-1}$ (see e.g.  \cite[Theorem 1.4]{FanLauRao2002}). Hence by \eqref{e-k8'},
  $$
  \dim_H\mu\geq \lim_{q\to \infty}\frac{\tau_\mu(q)}{q-1}\geq \min\left\{1, \;\frac{\log N_L-\log \kappa_L}{L\log |\beta|}\right\}.
  $$
  Since $\mu$ is supported on $K$, it follows that  $\dim_HK\geq \dim_H\mu \geq \min\left\{1, \;\frac{\log N_L-\log \kappa_L}{L\log |\beta|}\right\}$. Letting $L\to \infty$ yields $\dim_HK\geq \min\{1, s\}$ and we are done.
\end{proof}

\begin{rem}
\label{rem-5.2}
Alternatively, in Step 3 of the above proof we can prove the inequality
\begin{equation}
\label{e-ine}
\dim_H\mu\geq  \min\left\{1, \;\frac{\log N_L-\log \kappa_L}{L\log |\beta|}\right\}
\end{equation}
by using \eqref{e-k6} and applying a result of Hochman. To see this, define
$$
h_G(\mu):=\lim_{n\to \infty}\frac{1}{n}\sum_{\underline{w}\in {\mathcal J}^n/\sim}(-\tilde{p}_w\log \tilde{p}_w),
$$
which is called the Garsia entropy of $\mu$. Since $\Phi_L$ is an algebraic IFS, it was implicitly proved in \cite{Hochman2014} that
\begin{equation}
\label{e-hoc}
\dim_H\mu=\min\left\{1, \frac{h_G(\mu)}{L\log |\beta|}\right\}.
\end{equation}
(See Sect.~3.4 of \cite{BreuillardVarju2015} for more explanations.) Now applying the second inequality in \eqref{e-k6} to the definition of $h_G(\mu)$  yields immediately that $h_G(\mu)\geq \log N_L-\log \kappa_L$ and so  \eqref{e-ine} follows from \eqref{e-hoc}.
\end{rem}

In the remaining part of this section, we prove Proposition \ref{pro-a1}. As usual, we use ${\Bbb Q},{\Bbb Z},{\Bbb C}$ to denote the sets of rational numbers, integers and complex numbers, respectively.  In the following lemma, we collect some elementary properties of algebraic numbers. For  a proof, see e.g. \cite{Pollard50}.
\begin{lem}
\label{lem-algebraic}
\begin{itemize}
\item[(i)] The totality of algebraic numbers forms a field, whilst the totality of algebraic integers forms a ring.
\item[(ii)] If $\theta$ is an algebraic number, there is an integer $k\neq 0$ such that $k\theta$ is an algebraic integer.
\item[(iii)] For a set of  algebraic numbers $\alpha_1, \ldots, \alpha_n$, there exists an algebraic integer $\theta$ so that $\alpha_i\in {\Bbb Q}(\theta)$ for  $1\leq i\leq n$, where  ${\Bbb Q}(\theta)$ is the field generated by ${\Bbb Q}$ and $\theta$.
\item[(iv)] Let $\alpha_1,\ldots, \alpha_n$ be all the algebraic conjugates of an algebraic number $\alpha$ with $\alpha_1=\alpha$. Then for each polynomial $f$ with rational coefficients, $f(\alpha_i)$ are algebraic conjugates of $f(\alpha)$ and moreover, $\prod_{i=1}^n f(\alpha_i)\in {\Bbb Q}$.  If additionally $\alpha$ is an algebraic integer and $f$ has integers for  coefficients, then $\prod_{i=1}^n f(\alpha_i)\in {\Bbb Z}$.
\end{itemize}
\end{lem}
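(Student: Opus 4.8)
The statement is a compendium of standard facts from algebraic number theory, so the plan is to assemble each part from the module-theoretic characterizations of algebraic numbers and algebraic integers, rather than manipulating minimal polynomials by hand. Throughout I would use two criteria: a complex number $\alpha$ is algebraic precisely when ${\Bbb Q}(\alpha)$ is a finite-dimensional ${\Bbb Q}$-vector space, and $\alpha$ is an algebraic integer precisely when it lies in a subring of ${\Bbb C}$ that is finitely generated as a ${\Bbb Z}$-module.

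For (i), given algebraic $\alpha,\beta$, the field ${\Bbb Q}(\alpha,\beta)$ is finite over ${\Bbb Q}$ by the tower law, so every element of it --- in particular $\alpha\pm\beta$, $\alpha\beta$, and $\alpha^{-1}$ when $\alpha\neq 0$ --- is algebraic; this gives the field structure. For the ring of algebraic integers, given $\alpha,\beta$ integral, the ring ${\Bbb Z}[\alpha,\beta]$ is spanned over ${\Bbb Z}$ by the finitely many monomials $\alpha^i\beta^j$ (bounded by the degrees of their monic integer relations), hence is a finitely generated ${\Bbb Z}$-module, so every one of its elements --- including $\alpha\pm\beta$ and $\alpha\beta$ --- is an algebraic integer.

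Part (ii) is the only explicit computation: if $\theta$ satisfies $a_n\theta^n+\cdots+a_0=0$ with $a_i\in{\Bbb Z}$ and $a_n\neq 0$, I would multiply through by $a_n^{n-1}$ and regroup so that, with $k=a_n$, the element $k\theta$ becomes a root of a monic polynomial with integer coefficients. Part (iii) then follows immediately: since ${\Bbb Q}$ has characteristic zero, the primitive element theorem produces $\gamma$ with ${\Bbb Q}(\gamma)={\Bbb Q}(\alpha_1,\ldots,\alpha_n)$, and applying (ii) to $\gamma$ yields an algebraic integer $\theta=k\gamma$ generating the same field, which therefore contains every $\alpha_i$.

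For (iv), the conjugacy claim I would read through field embeddings: each embedding $\sigma\colon{\Bbb Q}(\alpha)\hookrightarrow{\Bbb C}$ sends $\alpha$ to some $\alpha_i$ and satisfies $\sigma(f(\alpha))=f(\alpha_i)$, so each $f(\alpha_i)$ is a conjugate of $f(\alpha)$ (allowing repetitions and possibly smaller degree). For rationality of the product I would form $\prod_{i=1}^n (y-f(\alpha_i))$ and observe that its coefficients are symmetric in $\alpha_1,\ldots,\alpha_n$ with rational coefficients, hence by the fundamental theorem of symmetric polynomials are polynomials in the elementary symmetric functions of the $\alpha_i$; these are, up to sign, the coefficients of the monic minimal polynomial of $\alpha$ and so lie in ${\Bbb Q}$. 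Setting $y=0$ gives $\prod_i f(\alpha_i)\in{\Bbb Q}$. Finally, when $\alpha$ is an algebraic integer and $f$ has integer coefficients, each $\alpha_i$ is an algebraic integer (conjugates share the same monic integer minimal polynomial), so each $f(\alpha_i)$ and their product are algebraic integers by (i); being also rational, the product lies in ${\Bbb Z}$ because ${\Bbb Z}$ is integrally closed in ${\Bbb Q}$. The only genuinely delicate point is this last step of (iv) --- coupling the symmetric-function rationality with integral closure --- together with the care needed in phrasing the conjugacy assertion when the $f(\alpha_i)$ collide.
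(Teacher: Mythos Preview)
Your proposal is correct and gives a clean, self-contained treatment of all four parts using the standard module-theoretic criteria, the primitive element theorem, and symmetric functions combined with integral closure. The paper itself does not prove this lemma at all: it simply states the facts and refers the reader to Pollard's textbook, so there is no approach to compare against --- you have supplied more than the paper does.
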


Now we are ready to prove Proposition \ref{pro-a1}.

\begin{proof}[Proof of Proposition \ref{pro-a1}]
Since $\beta, a_1,\ldots, a_\ell$ are algebraic numbers, by Lemma \ref{lem-algebraic}(iii),  there exists an algebraic integer $\lambda$ so that $$\beta, a_1,\ldots, a_\ell\in \Bbb Q(\lambda),$$
where $\Bbb Q(\lambda)$ is the field generated by $\Bbb Q$ and $\lambda$. Hence there exist polynomials  $f$, $g_1,\ldots, g_\ell$ with rational coefficients so that
$\beta=f(\lambda)$ and $a_i=g_i(\lambda)$ for $i=1,\ldots, \ell$.

Let $d$ be the degree of $\lambda$, and let $\lambda_1=\lambda$,\ldots, $\lambda_d$ be the algebraic conjugates of $\lambda$.
Set for $j=1,\ldots, d$,
$$
\beta_j=f(\lambda_j),\quad a_{i,j}=g_i(\lambda_j),\quad i=1,\ldots, \ell.
$$
Then for each $j$, $\beta_j$ is an algebraic conjugate of $\beta$ (so it is an algebraic integer) and $a_{i,j}$ is an algebraic conjugate of $a_i$ for each $i$.

Taking a permutation of the indices $\{2,\ldots, d\}$ if necessary, we may assume that there exist two integers $m, m'$ with  $1\leq m\leq m'\leq d$ such that
$|\beta_j|>1$ for $1\leq j\leq m$, $|\beta_j|=1$ for $m< j\leq m'$ and $|\beta_j|<1$ for $m'<j\leq d$.

For $1\leq i\leq \ell$ and $1\leq j\leq d$, define $h_{i,j}:\Bbb C\to \Bbb C$ by
$$
h_{i,j}(z)=\frac{z}{\beta_j}+a_{i,j}.
$$
For $u=u_1\ldots u_n\in \Sigma_n$, write $h_{u, j}=h_{u_1,j}\circ \cdots \circ h_{u_n, j}$. Notice that $h_{u,1}=\phi_u$ (here we view $\phi_u$ as a mapping on $\Bbb C$). We will use the following simple but important algebraic property:
\begin{itemize}
\item[({\bf P})] If $h_{u,j}=h_{v,j}$ for some $u,v\in \Sigma_n$ and some $j\in \{1,\ldots, d\}$, then $h_{u,j'}=h_{v,j'}$ for all $j'\in \{1,\ldots, d\}$.
\end{itemize}

This property is a consequence of Lemma \ref{lem-algebraic}(iv). Indeed, for $u, v\in \Sigma_n$ and $j\in \{1,\ldots, d\}$,
$$
h_{u, j}(z)-h_{v, j}(z)=\beta_j^{-n} \sum_{p=1}^n(a_{u_p, j}-a_{v_p, j})\beta_j^{n-p+1}=\beta_j^{-n}H(\lambda_j),
$$
where $H$ is a polynomials with rational coefficients given by
\begin{equation}
\label{e-h1}
H(x)=\sum_{p=1}^n (g_{u_p}(x)-g_{v_p}(x)) f(x)^{n-p+1}.
\end{equation}
By Lemma \ref{lem-algebraic}(iv), if $H(\lambda_j)=0$ for some $j\in \{1,\ldots, d\}$, then $H(\lambda_{j'})=0$ for all $j'\in \{1,\ldots, d\}$. This proves the property ({\bf P}).

Now define an  IFS $\Psi=\{\psi_i\}_{i=1}^\ell$ on $\Bbb C^m$ by
$$
\psi_i(z_1,\ldots, z_m)=(h_{i,1}(z_1), h_{i,2}(z_2),\ldots, h_{i, m}(z_m)).
$$
Then for $u\in \Sigma_n$, $\psi_u(z_1,\ldots, z_m)=(h_{u,1}(z_1), h_{u,2}(z_2),\ldots, h_{u, m}(z_m))$.  Due to  the property ({\bf P}), we see that
for $u, v\in \Sigma_n$,
\begin{equation}
\label{e-dual}
\psi_u=\psi_v \mbox{ if and only if }\phi_u=\phi_v.
\end{equation}

Let $\widetilde{K}$ be the attractor of $\Psi$ and let $A$ denote the diagonal matrix $\mbox{diag}(\beta_1^{-1},\ldots, \beta_m^{-1})$. A direct calculation shows that
for each $u\in \Sigma_n$,
\begin{equation}
\label{e-la1}
A^{-n}\psi_u(z)=z+A^{-n}\psi_u(0)=z+(t_{u,1}, t_{u,2},\ldots,  t_{u,m}),
\end{equation}
where
\begin{equation}\label{e-tij}
t_{u,j}:=\sum_{p=1}^n a_{u_p, j} \beta_j^{n-p+1},\quad j=1,\ldots, d.
\end{equation}
We claim that there exists a constant $C>0$ such that for every $n\in \N$ and   $u, v\in \Sigma_n$,
\begin{equation}
\label{e-claim}
\mbox{either  $\psi_u=\psi_v$} \quad \mbox{or}\quad  |A^{-n}\psi_u(0)-A^{-n}\psi_v(0)|\geq C   n^{-(\frac{m'}{m}-1)}.
\end{equation}

To prove this claim, we apply an idea of  Garsia used in \cite[Lemma 1.51]{Garsia1962}.  Choose a positive integer $M$ so that $Ma_{i,j}$ are algebraic integers for all $i, j$. The existence of such $M$ follows from Lemma \ref{lem-algebraic}(ii).   For $u, v\in \Sigma_n$,
by \eqref{e-tij} we have
\begin{equation}
\label{e-h2}
t_{u,j}-t_{v, j}=\sum_{p=1}^n (a_{u_p, j}-a_{v_p, j})  \beta_j^{n-p+1}=H(\lambda_j),\quad j=1,\ldots, d,
\end{equation}
where $H$ is defined as in \eqref{e-h1}.
It follows from Lemma \ref{lem-algebraic}(iv) that
$$
\prod_{j=1}^d(t_{u,j}-t_{v, j})=\prod_{j=1}^d H(\lambda_j)\in \Bbb Q.
$$
Since $M^d\prod_{j=1}^d(t_{u,j}-t_{v, j})=\prod_{j=1}^d\left(\sum_{p=1}^n (Ma_{u_p, j}-Ma_{v_p, j})  \beta_j^{n-p+1}\right)$ is an algebraic integer, we have
\begin{equation}
\label{e-la2} \prod_{j=1}^d(t_{u,j}-t_{v, j})\in M^{-d}\Bbb Z.
\end{equation}

 Now assume that  $\psi_u\neq \psi_v$.  Then by the property ({\bf P}), $h_{u, j}\neq h_{v,j}$ for all $j\in \{1,\ldots, d\}$. So by \eqref{e-la1}, $t_{u,j}\neq t_{v,j}$ for all $j\in \{1,\ldots, d\}$. Hence  $\prod_{j=1}^d(t_{u,j}-t_{v, j})\neq 0$.  It follows from \eqref{e-la2} that
\begin{equation}
\label{e-la3}
\prod_{j=1}^d|t_{u,j}-t_{v, j}|\geq M^{-d}.
\end{equation}
Meanwhile by \eqref{e-h2}, a simple calculation shows that
$\prod_{j=m+1}^d|t_{u,j}-t_{v, j}|\leq Dn^{m'-m},$
where $$D:=   \left(\max\{2|a_{i,j}|:\; 1\leq i\leq \ell, \; m+1\leq j\leq d\}\right)^{d-m}\cdot \prod_{j\geq m'+1}^d\frac{|\beta_j|}{1-|\beta_j|}.$$
Combining this with \eqref{e-la3} yields
  $\prod_{j=1}^m|t_{u,j}-t_{v, j}|\geq D^{-1} M^{-d} n^{-(m'-m)}$, which implies that
\begin{eqnarray*}
|A^{-n}\psi_u(0)-A^{-n}\psi_v(0)|&=& |(t_{u,1}-t_{v, 1}, \ldots, t_{u,m}-t_{v, m})|\\
& \geq & \max_{1\leq j\leq m}|t_{u,j}-t_{v, j}|\\
&\geq & \left(\prod_{j=1}^m|t_{u,j}-t_{v, j}|\right)^{1/m}\\
&\geq & \left(D^{-1} M^{-d} n^{-(m'-m)}\right)^{1/m}.
\end{eqnarray*}
This proves the claim \eqref{e-claim} by setting $C=(DM^d)^{-1/m}$.

Now notice that for $u\in \Sigma_n$ and $x\in \Bbb C^m$, $\psi_u(x)=A^nx+\psi_u(0)$ and so $A^{-n} \psi_u(x)=x+A^{-n}\psi_u(0)$. Hence for $u\in \Sigma_n$ and $z\in \Bbb C^m$,
\begin{equation}\label{e-h3}
\begin{split}
\psi_u(\widetilde{K})\cap A^n(B(z,1))\neq \emptyset \Longleftrightarrow & (\widetilde{K}+A^{-n}\psi_u(0))\cap B(z,1)\neq \emptyset\\
 \Longleftrightarrow&  A^{-n}\psi_u(0)\in (B(z,1)-\widetilde{K}).
\end{split}
\end{equation}
However by  \eqref{e-claim}, for any two distinct maps $\psi_u, \psi_{v}$ in the set $\{\psi_u:\; u\in \Sigma_n\}$,   $|A^{-n}\psi_u(0)-A^{-n}\psi_v(0)|>C n^{-(\frac{m'}{m}-1)}$.
As the set $(B(z,1)-\widetilde{K})$ is contained in a ball $B^*$ of radius $1+\mbox{diam}(\widetilde{K})$,  a  volume argument shows that
$(B(z,1)-\widetilde{K})$ contains at most
$$
\left(\frac{2+2\mbox{diam}(\widetilde{K})}{C n^{1-(m'/m)}/2}\right)^{2m}
$$
many points in the set $\{A^{-n}\psi_u(0):\; u\in \Sigma_n\}$; to see this, simply notice that in the Euclidean space $\mathbb{C}^m\simeq\mathbb{R}^{2m}$ the balls $B(x_i, C n^{1-(m'/m)}/2)$, with $x_i\in B^*\cap  \{A^{-n}\psi_u(0):\; u\in \Sigma_n\}$, are disjoint subsets of $2B^*$ (where $2B^*$ denotes the ball of radius two times that of $B^*$ and with the same center as $B^*$).
This together with \eqref{e-h3} yields that
\begin{eqnarray*}
\kappa_n&:=&\sup_{z\in {\Bbb C}^m}\#\{\psi_u:\; u\in \Sigma_n, \;\psi_u(\widetilde{K})\cap A^n(B(z,1))\neq \emptyset\}\\
&\leq & \left(\frac{2+2\mbox{diam}(\widetilde{K})}{C n^{1-(m'/m)}/2}\right)^{2m}=\left(\frac{4+4\mbox{diam}(\widetilde{K})}{C}\right)^{2m}  n^{2(m'-m)}.\\
\end{eqnarray*}
Hence $\lim_{n\to \infty} \frac{1}{n}\log \kappa_n=0$.  Moreover, when $m'=m$ (which happens if no conjugates of $\beta$ lie on the unit circle $\{|z|=1\}$), the sequence $(\kappa_n)$ is  bounded. This completes the proof of Proposition \ref{pro-a1}.
\end{proof}

\begin{rem}
Theorem \ref{thm-1.2} and Proposition \ref{pro-a1}   can be further used to calculate the dimension of the attractors of certain algebraic IFS. This will be exploited in a separate paper.
\end{rem}

\section{Proof of Theorem \ref{thm-1.4}}
\label{S-6}
In this section, we prove Theorem \ref{thm-1.4}. The proof is quite direct and only uses the definition of the AWSC.
\begin{proof}[Proof of Theorem \ref{thm-1.4}] Take a positive integer $q$ so that $qa_i\in \Z^d$ for all $1\leq i\leq \ell$. Let $r$ be the least integer greater than $q\mbox{diam}(K)$.  Recall that
$$
W_n=\left\{i_1\ldots i_k\in \{1,\ldots, \ell\}^k:\;k\geq 1, \; \frac{1}{|m_{i_1}\ldots m_{i_k}|}\leq 2^{-n}<\frac{1}{|m_{i_1}\ldots m_{i_{k-1}}|} \right\}.
$$
Below we prove that
\begin{equation}
\label{e-wn}
\sup_{x\in \R^d}\#\{\phi_u:\; u\in W_n,\; \phi_u(K)\cap B(x, 2^{-n}) \neq \emptyset\}< (n+1)^\ell (r+2q\max_i|m_i|+1)^d,
\end{equation}
which clearly implies that $\Phi$ satisfies the AWSC.

To show \eqref{e-wn},  notice that for each $u=u_1\ldots u_k\in W_n$, the map
$\phi_u$ is of the form
\begin{equation}
\label{e-form}
\frac{x}{m_{u_1}\cdots m_{u_k}}+\frac{{\bf t}}{qm_{u_1}\cdots m_{u_k}},
\end{equation}
where ${\bf t}\in \Z^d$ and
\begin{equation}
\label{e-7.1'}
2^n\leq |m_{u_1}\cdots m_{u_k}|<2^n\max_i|m_i|,
\end{equation}
moreover,  $m_{u_1}\cdots m_{u_k}=\prod_{i=1}^\ell m_i^{\tau_i}$ for some integers  $\tau_1,\ldots, \tau_\ell \in \{0, 1,\ldots, n\}$. It follows that the  contraction ratio of $\phi_u$, as $u$ runs over $W_n$,  can take at most $(n+1)^\ell$ different values.

Suppose on the contrary that \eqref{e-wn} does not hold. Then there exists $x\in \R^d$ and $L:=(r+2q\max_i|m_i|+1)^d$ different maps $\phi_u$ with the same  contraction ratio  such that $u\in W_n$ and
 $\phi_u(K)\cap B(x, 2^{-n})\neq \emptyset$. Let $\rho$ denote this  contraction ratio. By \eqref{e-form}, each of these maps is of the form $\rho x+(\rho/q){\bf t}$ with ${\bf t}\in \Z^d$.  Hence there exist $L$ distinct points ${\bf t}_1,\ldots, {\bf t}_L\in \Z^d$ so that
 $$
 \left(\rho K +\frac{\rho}{q}{\bf t}_i\right) \cap B(x, 2^{-n})\neq \emptyset,\qquad   i=1,\ldots, L,
 $$
 equivalently
 \begin{equation}
 \label{e-7.2'}
 {\bf t}_i\in E:=B(q\rho^{-1}x, \rho^{-1} 2^{-n}q)-qK,\qquad  i=1,\ldots, L.
 \end{equation}
 By \eqref{e-7.1'}, $2^{-n}\rho^{-1}<\max_i |m_i|$.
Hence  $E$ is contained in a box in $\R^d$ with side length $q\mbox{diam}(K)+ 2q\max_i |m_i|<r+ 2q\max_i|m_i|$.  However each such box can not contain $L=(r+ 2q\max_i|m_i|+1)^d$ different  integral points. This contradicts \eqref{e-7.2'}. Hence \eqref{e-wn} holds and we are done.
\end{proof}
\appendix

\section{The proof of Theorem \ref{thm-1.1}  and some extensions}
\label{S-A}
Throughout this section, let $\Phi=\{\phi_i\}_{i=1}^\ell$ be a dimensional regular IFS of similarities on $\R^d$ with ratios $r_1,\ldots, r_\ell$, and let ${\bf p}=(p_1, \ldots, p_\ell)$ be a probability vector with strictly positive entries. Let $\mu$ be the self-similar measure generated by $\Phi$ and~${\bf p}$.

We begin with a simple lemma.

\begin{lem}
\label{lem-A1}
Let $\eta$ be the self similar measure generated by $\Phi$ and a probability vector $\tilde{{\bf p}}=(\tilde{p}_1,\ldots, \tilde{p}_\ell)$. Then for $\eta$-a.e.~$z\in \R^d$,
$$
\overline{d}(\mu,z)\leq \frac{\sum_{i=1}^\ell \tilde{p}_i\log p_i}{\sum_{i=1}^\ell \tilde{p}_i\log r_i}.
$$
\end{lem}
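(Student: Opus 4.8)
The plan is to transfer the problem to the symbolic space and apply the strong law of large numbers. Let $\Sigma=\{1,\ldots,\ell\}^\N$ and let $\pi\colon\Sigma\to K$ be the canonical projection $\pi(\omega)=\lim_{n\to\infty}\phi_{\omega_1}\circ\cdots\circ\phi_{\omega_n}(x_0)$ (independent of the starting point $x_0$). If $m$ denotes the Bernoulli product measure on $\Sigma$ associated with the weight $\tilde{{\bf p}}=(\tilde{p}_1,\ldots,\tilde{p}_\ell)$, then $\eta=\pi_*m$. Hence it suffices to prove that for $m$-a.e.\ $\omega\in\Sigma$,
$$
\overline{d}(\mu,\pi(\omega))\le \frac{\sum_{i=1}^\ell \tilde{p}_i\log p_i}{\sum_{i=1}^\ell \tilde{p}_i\log r_i}=:c;
$$
the conclusion for $\eta$-a.e.\ $z$ follows at once, since $\pi^{-1}(\{z:\overline{d}(\mu,z)>c\})$ is contained in the $m$-null set of $\omega$ violating the displayed bound, so that set has $\eta$-measure $m(\pi^{-1}(\cdot))=0$.

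Next I would record the elementary lower bound coming from self-similarity. Iterating $\mu=\sum_i p_i\,\mu\circ\phi_i^{-1}$ gives $\mu(A)=\sum_{|u|=n}p_u\,\mu(\phi_u^{-1}(A))$ for every Borel set $A$, and retaining only the term $u=v$ in the expansion of $\mu(\phi_v(K))$ (for which $\phi_v^{-1}\phi_v(K)=K$) yields $\mu(\phi_v(K))\ge p_v$ for every finite word $v$. Writing $z=\pi(\omega)$ and $\omega|_n=\omega_1\cdots\omega_n$, the point $z$ lies in the closed set $\phi_{\omega|_n}(K)$, whose diameter is $r_{\omega|_n}\mbox{diam}(K)$; hence $\phi_{\omega|_n}(K)\subseteq B(z,r_{\omega|_n}\mbox{diam}(K))$, and therefore $\mu\big(B(z,r_{\omega|_n}\mbox{diam}(K))\big)\ge p_{\omega|_n}$.

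To control $\overline{d}(\mu,z)$ along \emph{all} radii rather than a single scale, for small $r>0$ I would take $n=n(r)$ to be the least integer with $r_{\omega|_n}\mbox{diam}(K)\le r$, so that also $r<r_{\omega|_{n-1}}\mbox{diam}(K)$. Then $\mu(B(z,r))\ge p_{\omega|_n}$, and since both $\log\mu(B(z,r))$ and $\log r$ are negative for small $r$, dividing $\log\mu(B(z,r))\ge\log p_{\omega|_n}$ by the negative quantity $\log r$ reverses the inequality; bounding $\log r$ above by $\log r_{\omega|_{n-1}}+\log\mbox{diam}(K)$ gives
$$
\frac{\log\mu(B(z,r))}{\log r}\le \frac{\log p_{\omega|_n}}{\log r_{\omega|_{n-1}}+\log\mbox{diam}(K)}=\frac{\sum_{k=1}^n\log p_{\omega_k}}{\sum_{k=1}^{n-1}\log r_{\omega_k}+\log\mbox{diam}(K)}.
$$
As $r\to 0$ one has $n\to\infty$, and by the strong law of large numbers, for $m$-a.e.\ $\omega$ the numerator is asymptotic to $n\sum_i\tilde{p}_i\log p_i$ and the denominator to $n\sum_i\tilde{p}_i\log r_i$ (the fixed constant $\log\mbox{diam}(K)$ being negligible); both Birkhoff sums have strictly negative almost sure means, so the right-hand side tends to $c$. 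Taking $\limsup_{r\to 0}$ gives $\overline{d}(\mu,\pi(\omega))\le c$.

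I expect no genuine obstacle here—the statement is elementary—so the only points demanding care are the sign bookkeeping when dividing logarithmic inequalities by the negative quantity $\log r$, and the verification that passing to the least $n$ with $r_{\omega|_n}\mbox{diam}(K)\le r$ really controls the full $\limsup$ and not just a sparse sequence of scales. The passage from an $m$-a.e.\ statement on $\Sigma$ to an $\eta$-a.e.\ statement on $\R^d$ through $\eta=\pi_*m$ is routine.
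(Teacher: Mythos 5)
Your proof is correct and follows essentially the same route as the paper's: lift to the symbolic space via $\eta=m\circ\pi^{-1}$, use the self-similarity of $\mu$ to get $\mu(B(\pi\omega,\,C r_{\omega|_n}))\ge p_{\omega_1}\cdots p_{\omega_n}$, and conclude by the ergodic theorem/SLLN for the Bernoulli measure $m$. The only (harmless) differences are cosmetic — you work with $\phi_{\omega|_n}(K)\subset B(z,r_{\omega|_n}\operatorname{diam}K)$ where the paper uses a large ball $B(0,R)\supset K$, and you spell out the interpolation between consecutive scales that the paper leaves implicit.
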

\begin{proof}
Let $(\Sigma, \sigma)$ denote the one-sided full shift over the alphabet $\{1,\ldots, \ell\}$, and $\pi:\Sigma\to \R^d$ the canonical coding map associated with $\Phi$, i.e.
$$
\pi x=\lim_{n\to \infty} \phi_{x_1}\circ \cdots\circ\phi_{x_n}(0),\quad x=(x_i)_{i=1}^\infty\in \Sigma.
$$
 Let $m$ be the infinite Bernoulli product measure on $\Sigma$ generated by the weight $(\tilde{p}_1,\ldots, \tilde{p}_\ell)$, i.e.
$m([x_1\ldots x_n])=\tilde{p}_{x_1}\cdots \tilde{p}_{x_n}$ for each cylinder $[x_1\ldots x_n]$. Then $\eta=m\circ \pi^{-1}$ (\cite{Hutchinson1981}).

Take a large $R>0$ such that the attractor of $\Phi$ is contained in the ball $B(0, R)$. For any $x=(x_i)_{i=1}^\infty\in \Sigma$ and $n\in \N$, since $\phi_{x_1}\circ\cdots \circ \phi_{x_n} (B(0, R))$ is a ball of radius $r_1\cdots r_n R$ which contains the point $\pi x$, we have
$$
\phi_{x_1}\circ\cdots \circ  \phi_{x_n} (B(0, R))\subset B(\pi x, 2r_1\cdots r_n R),
$$
 so by the self-similarity  of $\mu$,
\begin{eqnarray*}
\mu( B(\pi x, 2r_1\cdots r_n R))&\geq& p_{x_1}\cdots p_{x_n} \mu\left((\phi_{x_1}\circ \cdots\circ \phi_{x_n})^{-1}B(\pi x, 2r_1\cdots r_n R) \right),\\
&\geq& p_{x_1}\cdots p_{x_n} \mu\left(B(0, R) \right)\\
&=& p_{x_1}\cdots p_{x_n}.
\end{eqnarray*}
It follows that
$$
\overline{d}(\mu, \pi x)\leq \limsup_{n\to \infty}\frac{\log (p_{x_1}\cdots p_{x_n})} {\log (r_{x_1}\cdots r_{x_n})},\quad x\in \Sigma.
$$
Applying Birkhoff's ergodic theorem to the righthand side of the above inequality yields that
$$
\overline{d}(\mu, \pi x)\leq \frac{\sum_{i=1}^\ell \tilde{p}_i\log p_i}{\sum_{i=1}^\ell \tilde{p}_i\log r_i}\quad \mbox{ for $m$-a.e.~$x\in \Sigma$.}
$$
This concludes the desired result since $\eta=m\circ \pi^{-1}$.
\end{proof}
To prove Theorem \ref{thm-1.1}, we also need the following well-known result in multifractal analysis. For a proof, see e.g.~\cite[Proposition 2.5(iv)]{LauNgai1999}.
\begin{lem}
\label{lem-A2}
Let $\nu$ be a compactly supported Borel probability measure on $\R^d$. Then for any $\beta\in \R$ and $q\geq 0$,
$$
\dim_H\{z\in \R^d:\; \underline{d}(\nu, z)\leq \beta\}\leq \beta q-\tau_\nu(q).$$
\end{lem}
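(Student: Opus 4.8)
The plan is to establish the estimate by a direct covering argument that reconciles the two $\liminf$'s appearing in the definitions of $\underline d(\nu,z)$ and $\tau_\nu(q)$. Fix $q\ge 0$, $\beta\in\R$, and small parameters $\epsilon,\delta>0$, and write $E=\{z\in\R^d:\underline d(\nu,z)\le\beta\}$. First I would record two consequences of the hypotheses. Since $\tau_\nu(q)=\liminf_{r\to0}\log\Theta_\nu(q,r)/\log r$, for every sufficiently small $r$ one has $\Theta_\nu(q,r)\le r^{\tau_\nu(q)-\delta}$; here I use that $\log r<0$, so the eventual lower bound on the quotient becomes an upper bound on $\Theta_\nu$. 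On the other hand, $\underline d(\nu,z)\le\beta$ means that each $z\in E$ admits arbitrarily small radii $r$ with $\nu(B(z,r))\ge r^{\beta+\epsilon}$; in particular $E\subseteq\mathrm{supp}(\nu)$, since outside the support the local dimension is $+\infty$.

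Next I would build, scale by scale, a good cover of $E$. For $k\in\N$ let $E_k$ be the set of $z\in E$ for which there is an $r_z\in(2^{-k-1},2^{-k}]$ with $\nu(B(z,r_z))\ge r_z^{\beta+\epsilon}$. Because each $z\in E$ has arbitrarily small admissible radii, $z\in E_k$ for infinitely many $k$, whence $E=\limsup_k E_k$ and thus $E\subseteq\bigcup_{k\ge K}E_k$ for every $K$. Fixing $K$ and, for each $k\ge K$, applying the $5r$-covering (Vitali) lemma to the family $\{B(z,r_z):z\in E_k\}$, I extract a countable disjoint subfamily $\{B(z_i,r_i)\}_{i\in J_k}$ whose inflations $\{B(z_i,5r_i)\}_{i\in J_k}$ cover $E_k$. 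Then $\{B(z_i,5r_i):k\ge K,\ i\in J_k\}$ covers $E$ by balls of diameter $\le 10\cdot 2^{-K}$, which is precisely what I need in order to estimate $\mathcal H^t(E)$.

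The heart of the argument is to bound $\#J_k$ via $\Theta_\nu$. Within $J_k$ the radii satisfy $2^{-k-1}<r_i\le 2^{-k}$, so the disjointness of the $B(z_i,r_i)$ forces the centers to be $>2^{-k}$-separated, while enlarging to the common radius $2^{-k}\ge r_i$ preserves the measure lower bound: $\nu(B(z_i,2^{-k}))\ge\nu(B(z_i,r_i))\ge c_0(2^{-k})^{\beta+\epsilon}$ for a constant $c_0>0$. The $2^{-k}$-separation makes the balls $B(z_i,2^{-k})$ of bounded overlap, so $J_k$ splits into at most $N_d$ subfamilies each consisting of pairwise disjoint radius-$2^{-k}$ balls centered in $\mathrm{supp}(\nu)$; applying the definition of $\Theta_\nu(q,2^{-k})$ to each and summing (valid once $K$ is large enough that $2^{-k}$ lies in the admissible range) yields
$$
\#J_k\,c_0^q(2^{-k})^{q(\beta+\epsilon)}\le\sum_{i\in J_k}\nu(B(z_i,2^{-k}))^q\le N_d\,\Theta_\nu(q,2^{-k})\le N_d(2^{-k})^{\tau_\nu(q)-\delta}.
$$
Hence $\#J_k\le C\,2^{ks'}$ with $s'=q(\beta+\epsilon)-\tau_\nu(q)+\delta$. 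Plugging this into the Hausdorff sum gives, for any $t>s'$,
$$
\sum_{k\ge K}\sum_{i\in J_k}(10r_i)^t\le 10^t C\sum_{k\ge K}2^{ks'}2^{-kt}=10^t C\sum_{k\ge K}2^{-k(t-s')},
$$
which tends to $0$ as $K\to\infty$. Thus $\mathcal H^t(E)=0$, so $\dim_H E\le s'=q(\beta+\epsilon)-\tau_\nu(q)+\delta$, and letting $\epsilon,\delta\downarrow 0$ yields $\dim_H E\le\beta q-\tau_\nu(q)$.

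I would flag the reconciliation of the two scales as the only delicate point: the admissible radii $r_z$ coming from $\underline d(\nu,z)\le\beta$ depend on $z$ and need not align with any fixed sequence, which is exactly why I pass to $E=\limsup_k E_k$ and then trade the exact disjointness at radius $r_i$ for bounded overlap at the common radius $2^{-k}$, so as to invoke $\Theta_\nu$ at a single scale. The case $q=0$ is automatically covered, since the factor $c_0^q$ and the exponent $q(\beta+\epsilon)$ both trivialize, and everything else is routine.
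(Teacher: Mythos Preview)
Your covering argument is correct and is the standard route to this well-known inequality. The paper itself does not give a proof but merely cites \cite[Proposition 2.5(iv)]{LauNgai1999}, where an argument along the same lines can be found; your write-up would serve as a self-contained substitute for that reference.
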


Now we are ready to prove Theorem \ref{thm-1.1}.

\begin{proof}[Proof of Theorem \ref{thm-1.1}]
Recall that $T$ satisfies the equation $\sum_{i=1}^\ell p_i^q r_i^{-T(q)}=1$. According to a general result of Falconer (see \cite[Theorem 6.2]{Falconer1999}),
\begin{equation}
\label{e-a0'}
\tau_\mu(q)\geq T(q),\quad q\in [0,1].
\end{equation}
This inequality will be used later.

Taking the derivative of $T$ at $q$ gives
\begin{equation}
\label{e-a0}
T'(q)=\frac{\sum_{i=1}^\ell p_i^q r_i^{-T(q)}\log p_i}{\sum_{i=1}^\ell p_i^q r_i^{-T(q)}\log r_i}
\end{equation}
and in particular,
\begin{equation}
\label{e-a*}
T'(1)=\frac{\sum_{i=1}^\ell p_i\log p_i}{\sum_{i=1}^\ell p_i\log r_i}=\dim_S\mu.
\end{equation}
Since $\Phi$ is  assumed to be dimensional regular,  we have
\begin{equation}
\label{e-a1}
\dim_H\mu =\min \{d, T'(1)\}.
\end{equation}
Meanwhile, as a general result on self-similar measures, one always has
\begin{equation}
\label{e-SS}
\tau_\mu'(1+)=\dim_H\mu.
\end{equation}
Indeed, it is known that for every self-similar measure $\mu$, $\tau_\mu'(1+)$ equals the entropy dimension of $\mu$ (see \cite[Theorem 5.1 and Remark 5.2]{ShmerkinSolomyak2016}); but since $\mu$ is exact dimensional \cite{FengHu2009}, its entropy dimension and  Hausdorff dimension coincide \cite{Young1982}.

 In what follows we prove the theorem by considering 3 different cases:  (i) $T'(1)\geq d$; (ii) $T'(1)<d$ and $T(0)\geq -d$;  (iii) $T'(1)<d$ and $T(0)< -d$.

{\sl Case (i): $T'(1)\geq d$}.  Let $K$  denote the attractor of $\Phi$. By \eqref{e-a1} we have $\dim_H\mu=d$, which implies that $\dim_B K=d$. By \cite[Theorem 1.1]{Ngai1997},
$$
\tau_\mu'(1-)\geq \dim_H\mu=d.
$$
However since $\tau_\mu(0)=-\dim_B K=-d$ and $\tau_\mu(1)=0$, the above inequality and the concavity of $\tau_\mu$ force that
$$
\tau_\mu(q)=d(q-1) \quad \mbox{for every }q\in [0,1].
$$
This proves part (1a) of the theorem.  To show part (2) of the theorem, first notice that $\tau_\mu$ is differentable on $(0,1)$ and moreover,  $\tau_\mu'(1-)=d=\dim_H\mu$, so by \eqref{e-SS} $\tau_\mu$ is differentable at $1$ as well. Next we analyze the multifractal structure of $\mu$. Since $\mu$ is exact dimensional with  $\dim_H\mu=d$, we have
$$\dim_H\{z\in \R^d:\; d(\mu, z)=d\}\geq \dim_H\mu=d;
$$
on the other hand by Lemma \ref{lem-A2}, $$\dim_H\{z:\; d(\mu, z)=d\}\leq d\cdot 1-\tau_\mu(1)=d,$$
 so we have
$$\dim_H\{z\in \R^d:\; d(\mu, z)=d\}=d=dq-\tau_\mu(q)$$
for $q\in [0,1]$.  This verifies  part (2) of the theorem.

{\sl Case (ii):  $T'(1)<d$ and $T(0)\geq -d$}.  To prove the conclusions of  the theorem, we need to show that for each $q\in [0,1]$,

\begin{equation}
\label{A-5}
\tau_\mu(q)=T(q)\quad  \mbox{ and }
\end{equation}
\begin{equation}
\label{A-6}
\dim_H\{z\in \R^d:\; d(\mu, z)=T'(q)\}=T'(q)q-T(q).
\end{equation}

To this end, fix $q\in [0,1]$.  By the concavity of $T$,  we have
$$
T'(q)q-T(q)\leq T'(0)\cdot 0-T(0)\leq d.
$$

Set $\tilde{{\bf p}}=(\tilde{p}_1,\ldots, \tilde{p}_\ell)$ with $\tilde{p}_i=p_i^qr_i^{-T(q)}$, and let $\eta$ be the self-similar measure generated by $\Phi$ and $\tilde{{\bf p}}$.
Applying \eqref{e-a*} to $\eta$ yields
$$
\dim_S\eta=\frac{\sum_{i=1}^\ell \tilde{p}_i \log \tilde{p}_i}{\sum_{i=1}^\ell \tilde{p}_i\log r_i}=\frac{\sum_{i=1}^\ell p_i^qr_i^{-T(q)}(q\log p_i-T(q)\log r_i)}{\sum_{i=1}^\ell p_i^qr_i^{-T(q)}\log r_i}=T'(q)q-T(q)\leq d,
$$
so by the dimensional regularity of $\Phi$, $\dim_H\eta=\dim_S\eta=T'(q)q-T(q)$. Applying Lemma \ref{lem-A1} and \eqref{e-a0}, we have
\begin{equation}
\label{e-a4}
\overline{d}(\mu, z) \leq \frac{\sum_{i=1}^\ell \tilde{p}_i\log p_i}{\sum_{i=1}^\ell \tilde{p}_i\log r_i}= \frac{\sum_{i=1}^\ell p_i^qr_i^{-T(q)}\log p_i}{\sum_{i=1}^\ell p_i^qr_i^{-T(q)}\log r_i}=T'(q) \quad\mbox{ for $\eta$-a.e.~$z$}.
\end{equation}
Take a strictly increasing sequence $(\alpha_n)$ of real numbers so that $\lim_n\alpha_n=T'(q)$. If $q>0$, then by Lemma \ref{lem-A2},
\begin{eqnarray*}
\dim_H\{z\in \R^d:\; \underline{d}(\mu,z)\leq \alpha_n\} &\leq  & \alpha_n q-\tau_\mu(q)\\
&< & T'(q) q-T(q),
\end{eqnarray*}
where we have used \eqref{e-a0'} in the last inequality.  Otherwise if $q=0$,  then by Lemma \ref{lem-A2}, for every $n$ and  sufficiently small $\epsilon_n>0$ so that $T(\epsilon_n)-T(0)>\alpha_n\epsilon_n$, we have
\begin{eqnarray*}
\dim_H\{z\in \R^d:\; \underline{d}(\mu,z)\leq \alpha_n\} &\leq  & \alpha_n \epsilon_n-\tau_\mu(\epsilon_n)\\
&\leq  & \alpha_n \epsilon_n-T(\epsilon_n)\\
&< & -T(0)\\
&= &T'(q)q-T(q),
\end{eqnarray*}
producing the same inequality.
 Since $\eta$ is exact dimensional with dimension $T'(q) q-T(q)$, the above inequality implies that
$$
\eta \{z\in \R^d:\; \underline{d}(\mu,z)\leq \alpha_n\}=0 \quad \mbox{ for each }n\in \N,
$$
and so
$$
\eta \{z\in \R^d:\; \underline{d}(\mu,z)<T'(q)\}=0.
$$
This together with \eqref{e-a4} yields $d(\mu,z)=T'(q)$ for $\eta$-a.e.~$z$. Hence
$$
\dim_H\{z\in \R^d:\; d(\mu,z)=T'(q)\}\geq \dim_H\eta=T'(q) q-T(q).
$$
Meanwhile by Lemma \ref{lem-A2},
$$\dim_H\{z\in \R^d:\; d(\mu,z)=T'(q)\}\leq T'(q) q-\tau_\mu(q)\leq T'(q) q-T(q).$$
 These two equations imply immediately  \eqref{A-6} and \eqref{A-5}. From \eqref{A-5} we see that $\tau_\mu$ is differentiable on $(0,1)$ and $\tau_\mu'(1-)=T'(1)$. By \eqref{e-a1}-\eqref{e-SS}, $\tau_\mu'(1+)=\min\{d, T'(1)\}=T'(1)$, so $\tau_\mu$ is also differentiable at $1$.

{\sl Case (iii):  $T'(1)<d$ and $T(0)< -d$}. Set $$\tilde{q}=\inf\{q\in (0,1):\; T'(q)q-T(q)\leq d\}.$$
By the concavity of $T$, the function $g(q):=T'(q)q-T(q)$ is decreasing in $q$  with $g(0)=-T(0)>d$ and $g(1)=T'(1)<d$. Hence
$\tilde{q}\in (0, 1)$.

If $q\in [\tilde{q},1]$, then $T'(q) q-T(q)\leq d$;   the same argument as that in Case (ii) shows that \eqref{A-6} and \eqref{A-5} hold for $q\in [\tilde{q},1]$ and that  $\tau_\mu$ is differentiable on $(\tilde{q},1]$.

 Finally assume that  $q\in [0, \tilde{q})$. By the definition of $\tilde{q}$ we have
 $$T'(\tilde{q}) \tilde{q}-T(\tilde{q})=d=\tau_\mu'(\tilde{q}+) \tilde{q}-\tau_\mu(\tilde{q}).$$
It follows that the tangent line of the graph of $\tau_\mu$ at the point $(\tilde{q}, \tau_\mu(\tilde{q})$) crosses at the $y$-axis at $(0, -d)$. Since $\tau_\mu$ is concave and
$\tau_\mu(0)\geq -d$, $\tau_\mu$ must take the linear expression of the statement over $[0, \tilde{q})$.  Notice that the slope of this line segment equals $(d+T(\tilde{q}))/\tilde{q}=T'(\tilde{q})=\tau_\mu'(\tilde{q}+)$, it follows that $\tau_\mu$ is differentiable on $(0,\tilde{q}]$ and
$$
[\tau'_\mu(1), \tau'_\mu(0+)]=\{T'(q):\; q\in [\tilde{q}, 1]\}.
$$
 Since \eqref{A-6} holds for $q\in [\tilde{q}, 1]$, we obtain the desired result on the multifractal structure of $\mu$.
This completes the proof of the theorem.
\end{proof}

\begin{rem}
\label{rem-A1}
\begin{itemize}
\item[(i)]
For a dimensional regular IFS $\Phi$ on $\R^d$ and a given $q>1$ with $T'(q)q-T(q)\leq  d$, if we have known that $\tau_\mu(q)=T(q)$  in advance, then following the same argument as in Case (ii) of the proof of Theorem \ref{thm-1.1}, we obtain that \begin{equation}
\label{e-new1}
\dim_HE_\mu(T'(q))=T'(q)q-T(q).
\end{equation}
\item[(ii)] Applying the above argument to the case when $\Phi$ is an IFS on $\R$ satisfying  the ESC,  we  conclude   that \eqref{e-new1} holds for every $q\in (1,\infty)$ so that  $T(q)\leq q-1$. Indeed for such $q$, by Theorem \ref{thm-S}(i), $\tau_\mu(q)=\min\{q-1, T(q)\}=T(q)$; meanwhile by the concavity of $T$,   $$T'(q)q-T(q)\leq \frac{T(q)-T(1)}{q-1}q-T(q)=\frac{T(q)}{q-1}\leq 1.$$
 \end{itemize}
\end{rem}

 In  the end of this section, we present several extensions of Theorem \ref{thm-1.1}.

First rather than self-similar measures, there is an analogue of Theorem \ref{thm-1.1} for the projections of quasi-Bernoulli measures  under a stronger assumption on $\Phi$.   To be more precise,  assume
that $\Phi=\{\phi_i=r_iU_i+a_i\}_{i=1}^\ell$ is an IFS of similarities on $\R^d$ such that for any $\sigma$-invariant ergodic measure $\eta$ on $\Sigma=\{1,\ldots, \ell\}^\N$, the Hausdorff dimension of the projection of $\eta$ under the coding map $\pi$ satisfies
$$
\dim_H\eta\circ \pi^{-1}=\min\left\{d,\; \frac{h(\eta)}{\lambda(\eta)}\right\},
$$
where $h(\eta)$ stands for the measure theoretic entropy of $\eta$ and $\lambda(\eta):=\sum_{i=1}^\ell\log (1/r_i)\eta([i])$.  For instance, this assumption holds for all IFS on $\R$ satisfying the ESC \cite{JordanRapaport2019}.
Suppose that $m$ is a quasi Bernoulli measure on $\Sigma$,  in the sense that there exists a constant $C>0$ such that
$$
C^{-1}m([I])m([J])\leq m([IJ])\leq C m([I])m([J]) \mbox{ for all } I, J\in \bigcup_{n=1}^\infty\{1,\ldots, \ell\}^n.
$$
Define $f, \phi_n\in C(\Sigma)$, $n\in \N$, by
$$f(x)=-\log r_{x_1} \mbox{ and } \phi_n(x)=\log m([x_1\ldots x_n]) \; \mbox{ for } x=(x_k)_{k=1}^\infty.
$$
For $q\in \R$, let $D(q)$ be the unique value so that
$$
P\left( (D(q) S_nf+q \phi_n)_{n=1}^\infty\right)=0,
$$
where $S_nf=f+f\circ \sigma+\ldots +f\circ \sigma^{n-1}$ and  $P(\cdot)$ stands for the sub-additive pressure (see e.g. \cite[Section~2.1]{BarralFeng2013}).
Then by adapting the proof of Theorem \ref{thm-1.1} and using some ideas of the proof of \cite[Theorem 1.3 (i)]{BarralFeng2013}, we can show that the conclusions of Theorem \ref{thm-1.1} (in which $T$ is replaced by $D$)  still holds for $\mu=m\circ\pi^{-1}$.

 Secondly, Theorem \ref{thm-1.1}  and Remark \ref{rem-A1} also extend to the convolutions of certain self-similar measures on $\R$.  To see it, let $\mu_i$, $i=1,2$, be the self-similar measures on $\R$  generated by $\Phi_i=\{\rho_ix+a_{i,j}\}_{j=1}^{\ell_i}$ and ${\bf p}_i=(p_{i,j})_{j=1}^{\ell_i}$. Assume that both $\Phi_1$ and $\Phi_2$ satisfy the OSC and $\log\rho_1/\log \rho_2\not\in {\Bbb Q}$.  Then according to \cite[Theorem 2.2]{Shmerkin2019}, for every $q>1$, the $L^q$-spectrum of $\mu_1*\mu_2$ is given by
$$\tau_{\mu_1*\mu_2}(q)=\min\{q-1, T_1(q)+T_2(q)\},$$
where $T_i(q):=\log(\sum_{j=1}^{\ell_i} p_{i,j}^q) /\log \rho_i$.     Moreover, by \cite[Theorem 1.4]{Hochman-S2012},    $\mu_1*\mu_2$ is exact dimensional with dimension equal to $\min\{1, T_1'(1)+T_2'(1)\}$.  Using the these results and modifying the proof of Theorem \ref{thm-1.1} correspondingly, one can show that the conclusions of  Theorem \ref{thm-1.1} still holds (in which we replace $T$ by $T_1+T_2$), furthermore, the multifractal formalism holds for $\mu_1*\mu_2$ for those $q>1$ so that
$(T_1'(q)+T_2'(q))q-(T_1(q)+T_2(q))\leq 1$.

Finally we remark that Theorem \ref{thm-1.1}  and Remark \ref{rem-A1}  extend to  a class  of dynamically driven self-similar measures on $\R$ considered in \cite{Shmerkin2019}.  To see it, let $(\mu_x)_{x\in X}$ be the dynamically driven self-similar measures generated by a pleasant model $(X, {\bf T}, \Delta, \lambda)$  satisfying the ESC (see \cite[Section~1.5]{Shmerkin2019} for the involved definitions).  Under mild assumptions (see \cite[Theorem 1.11]{Shmerkin2019}), Shmerkin  showed that the $L^q$-spectrum of $\mu_x$, for each $x\in X$ and   $q>1$, is given by  $\tau_{\mu_x}(q)=\min\{q-1, T(q)\}$, where
$$T(q):=\frac{\int \log \|\Delta(x)\|_q^q\; d{\Bbb P}(x)}{\log \lambda}.$$
Using similar arguments, we can show that under the same assumptions as in \cite[Theorem 1.11]{Shmerkin2019}, the conclusions of Theorem \ref{thm-1.1}  (in which $\mu$ is replaced by $\mu_x$) hold  for each $x\in X$; moreover for each $q>1$ with $T'(q)q-T(q)\leq 1$, we have $\dim_HE_{\mu_x}(T'(q))=T'(q)q-T(q)$ for every $x$.
\bigskip

{\noindent \bf  Acknowledgements}.
The research of both authors was supported in part by University of Paris 13, the HKRGC GRF grants (projects CUHK14301218, CUHK14304119), the Direct Grant for Research in CUHK,  and the France/Hong Kong joint research scheme PROCORE (33160RE, F-CUHK402/14). They are grateful to Pablo Shmerkin for helpful  conversations on overlapping algebraic IFSs.

\end{document}